  \pgfplotsset{compat=newest}
	\definecolor{bluecolor}{RGB}{122,166,218}
	\definecolor{redcolor}{RGB}{213, 78, 83}
	\definecolor{orangecolor}{RGB}{231, 140, 69}
	\definecolor{greencolor}{RGB}{185, 202, 74}
	\definecolor{purplecolor}{RGB}{195, 151, 216}
\newcommand{\greencolor}{black!30!green}
\newcommand{\orangecolor}{black!30!orange}
\newcommand{\redcolor}{black!30!red}
\newcommand{\darkbluecolor}{black!50!blue}
\newcommand{\lightbluecolor}{white!50!blue}
\Crefname{ALC@unique}{Line}{Lines}
\crefname{problem}{Problem}{Problems}
\newcommand{\bs}[1]{{\boldsymbol #1}}
\newcommand{\D}{{\mathcal D}}
\newcommand{\eps}{{\varepsilon}}
\DeclareMathOperator*{\argmin}{arg\,min}
\newcommand{\diag}{{\operatorname{diag}}}
\title{{Sparse solutions in optimal control of PDEs with uncertain
    parameters: the linear case}\thanks{Submitted to the editor \today\funding{Supported in part by the National Science Foundation
      under grants CBET-1507009 and DMS-1723211, and by the
      U.S.\ Department of Energy Office of Science, Advanced
      Scientific Computing Research (ASCR), Scientific Discovery
      through Advanced Computing (SciDAC) program.}}}
\author{Chen Li%
  \thanks{Courant Institute of Mathematical Sciences, New York
    University; \email{lichen@cims.nyu.edu}, \email{stadler@cims.nyu.edu}}%
  \and
  Georg Stadler%
  \footnotemark[2]
}
\begin{document}

\maketitle

\begin{abstract}
  We study sparse solutions of optimal control problems governed by
  PDEs with uncertain coefficients.  We propose two formulations,
  one where the solution is a deterministic control optimizing the
  mean objective, and a formulation aiming at stochastic controls that
  share the same sparsity structure.  In both formulations, regions
  where the controls do not vanish can be interpreted as optimal
  locations for placing control devices. In this paper, we focus on
  linear PDEs with linearly entering uncertain parameters.
  Under these assumptions, the deterministic formulation
  reduces to a problem with known structure, and thus we mainly focus on the
  stochastic control formulation. Here, shared sparsity is achieved by
  incorporating the $L^1$-norm of the mean of the pointwise squared
  controls in the objective. We reformulate the problem using a norm
  reweighting function that is defined over physical space only and
  thus helps to avoid approximation of the random
  space using samples or quadrature. We show that a fixed
  point algorithm applied to the norm reweighting formulation leads to
  a variant of the well-studied iterative reweighted least squares
  (IRLS) algorithm, and we propose a novel preconditioned
  Newton-conjugate gradient method to speed up the IRLS
  algorithm. We combine our algorithms with low-rank
  operator approximations, for which we provide estimates of the
  truncation error. We carefully examine the computational complexity
  of the resulting algorithms. The sparsity
  structure of the 
  optimal controls and the performance of the solution algorithms are
  studied numerically using control problems governed by the Laplace
  and Helmholtz equations. In
  these experiments the Newton variant clearly
  outperforms the IRLS method.
\end{abstract}

\begin{keywords}
Optimal control of PDEs, uncertainty, sparse controls, iterative
reweighting, $L^1$-minimization, Newton method
\end{keywords}

\begin{AMS}
  60H35, 
  35Q93, 
  35R60, 
  49M15, 
  49J52 
\end{AMS}


\section{Introduction}
\label{sec:intro}
Solving optimal control problems governed by partial differential
equations (PDEs) that contain uncertain parameters represents a
significant challenge. However, thanks to theoretical and algorithmic
advances, and to the ever increasing availability of computing resources,
significant progress has been made over the last decade
\cite{Kouri12, BorziVonWinckel11, KouriHeinkenschlossRidzalEtAl13,
  BennerOnwuntaStoll16, NegriRozzaManzoniEtAl13, AliUllmannHinze17,
  GarreisUlbrich17, TieslerKirbyXiuEtAl12, ChenQuarteroni14}.
  In this paper, we aim at optimal control
problems under uncertainty,
where the control objective
involves a sparsifying term and, as a consequence, distributed optimal
controls vanish on parts of the domain. The areas where controls are
nonzero are interpreted as locations where it is most efficient to
employ control devices \cite{Stadler09, HerzogStadlerWachsmuth12,
  ClasonKunisch12, 
  Casas17, BrunnerClasonFreibergerEtAl12, CaponigroFornasierPiccoli15,
CasasRyllTroltzsch13, CiaramellaBorzi16}.

Given a physical domain $\D\subset \mathbb R^n$, $n\in \{1,2,3\}$, we consider
a partial differential equation involving
uncertain parameters written as
\begin{equation}\label{eq:state}
c(y,u,m(\omega)) = 0.
\end{equation}
Here, $u$ and $y$ are the control and the state variables,
respectively, $m(\omega)$ is an uncertain parameter, and
$c(\cdot\,,\cdot\,,\cdot)$ denotes the PDE relating these variables.
We assume that the distribution law of $m$, denoted by $\mu$, is
supported on a Hilbert space $\mathscr H$, and consider $m$ as an
$\mathscr H$-valued random variable.  That is, for a probability space
$(\Omega, \mathcal F, \mu)$, $m : \Omega \to \mathscr H$, where
$\Omega$ is the set of events, $\mathcal F$ a $\sigma$-algebra of sets
in $\Omega$, and $\mu$ is a positive normalized measure.  For example,
$\mathscr H$ can be chosen as infinite-dimensional function
space over $\D$ or its boundary $\partial \D$.
%
We assume that for every $u\in L^2(\D)$ and $\omega\in \Omega$,
\eqref{eq:state} has a unique (weak) solution $y=y(u;\omega,\cdot) \in
V$, with an appropriate space $V\subset L^2(\D)$.\footnote{While the
  discussion is kept general in this introduction, in most of
  the remainder of this paper we focus on linear equations, where this
  assumption can easily be verified.}
%
For $\omega\in \Omega$,  we consider the
optimal control problem in reduced form,
\begin{equation}\label{eq:optcon}
  \min_{u\in U_{\!\text{ad}}} J(\omega, u) := \frac 12
  \int_\D (y(u;\omega,\cdot)
  - y_d)^2 \,d\bs x + \frac\alpha 2 \int_\D u^2\,d\bs x.
\end{equation}
Here, $U_{\!\text{ad}} = \{u\in L^2(\D): a\le u\le b \text{ a.e.}\}$,
where $a,b\in L^2(\D)$ with $a<b$ almost everywhere. Moreover,
$\alpha>0$ is a regularization/control cost parameter and $y_d\in L^2(\D)$ a given
desired state.
For each $\omega\in \Omega$, \eqref{eq:optcon} is a classical
control-constrained linear-quadratic optimal control
problem. As is well known, this problem has a unique solution that
depends on $\omega$.

We are interested in distributed optimal control
problems, where the controls are sparse, i.e., they vanish on parts of
the domain. We propose two practically relevant approaches to
sparse optimal control under uncertainty.
The first computes a deterministic sparse
control that is optimal for the expectation of the cost functional.
The second aims at stochastic controls that depend on the
uncertain parameter, but have shared sparsity structure.

\subsection{Deterministic sparse optimal control}
\label{sec:deterministic}
Robust deterministic controls are optimal in expectation
\cite{Kouri12, BorziVonWinckel11, KouriHeinkenschlossRidzalEtAl13}, or
optimal with respect to a risk measure
\cite{AlexanderianPetraStadlerEtAl17, KouriSurowiec16}.  Since in this
formulation the controls are deterministic, it is straightforward to
add a sparsity-enhancing term for the control to \eqref{eq:optcon}, resulting in
\begin{equation}\label{eq:optcon_d}
  \min_{u\in U_{\!\text{ad}}} \mathcal J_{{d}}(u) = \frac 12
  \int_\Omega \int_\D (y(u;\omega,\cdot) - y_d)^2 \,d\bs x\,d\mu
  + \frac\alpha 2 \int_\D u^2\,d\bs x
+ \beta\int_\D |u|\,d\bs x.
\end{equation}
Here, $\beta>0$ is the weight for the sparsity-enhancing $L^1$-term,
in which $|\cdot|$ denotes the absolute value. The deterministic
optimal controls found from this formulation vanish on parts of the
spatial domain $\D$, and the value of $\beta$ influences how sparse
the control are. The resulting control structure can be used to decide
on the placement of control devices. In the deterministic context,
extensions of this approach have been applied for instance to optimal
device placement in tissue imaging
\cite{BrunnerClasonFreibergerEtAl12}, sparse control of alignment
models \cite{CaponigroFornasierPiccoli15}, optimal control of
traveling wave fronts \cite{CasasRyllTroltzsch13} or shaping controls
for quantum systems \cite{CiaramellaBorzi16}.

\subsection{Stochastic optimal control with shared sparsity}
\label{sec:stochastic}
An alternative problems class and this paper's main focus is to find
stochastic\footnote{While stochasticity is often used in the context of
  time-dependent problems, here it simply means that the controls
  depend on the random variable, in contrast to the deterministic
  optimal control formulation discussed above.} controls $u\in
\bs U_{\!\text{ad}}:=\{u\in L^2_\mu(\Omega,L^2(\D)), a\le
u(\omega,\cdot)\le b \text{ a.e.}\}$, i.e., individual controls
$u=u(\omega)$ for each $\omega\in\Omega$ \cite{TieslerKirbyXiuEtAl12,
  BennerOnwuntaStoll16, NegriRozzaManzoniEtAl13, ChenQuarteroni14}.
These controls
minimize the expected objective value, i.e.,
\begin{equation}\label{eq:optcon_s1}
  \min_{u\in \bs U_{\!\text{ad}}} \int_\Omega J(\omega,u(\omega,\cdot)) \,d\mu,
\end{equation}
with $J(\cdot,\cdot)$ as defined in \eqref{eq:optcon}. Note
that \eqref{eq:optcon_s1} amounts to solving optimal control problems
of the form \eqref{eq:optcon} for each $\omega\in \Omega$, followed by
computing the expectation over the values of the objective obtained
with these optimal controls. One possibility for incorporating
sparsity in this formulation is to add sparsity-enhancing
regularization for each $\omega$. However, since we interpret regions
where optimal controls are non-zero as regions where we propose to
place control devices, it is more meaningful to require that the
stochastic controls {\em share} their sparsity structure. This can be
achieved by adding a sparsity-enforcing term to the
objective functional in \eqref{eq:optcon_s1}:
\begin{equation}\label{eq:optcon_s}
  \min_{u\in \bs U_{\!\text{ad}}} \mathcal J(u):=\int_\Omega J(\omega,u(\omega,\cdot)) \,d\mu
  +  \beta\int_\D
  \left(\int_\Omega|u(\omega,\cdot)|^2\,d\mu\right)^{\!\frac 12 }
  \,d\bs x.
\end{equation}
Here, $\beta>0$ and the outer integral in the sparsity-enforcing term
is over the pointwise marginal distribution of the squared
controls. Note that the sparsity term is well-defined and finite for
$u\in \bs U_{\!\text{ad}}$.  As will be shown, using the $L^1$-norm of
the pointwise expectation results in optimal controls
$u(\omega,\cdot)$ with shared sparsity.  While the optimal controls
are stochastic, i.e., they depend on $\omega$, the controller
locations resulting from \eqref{eq:optcon_s} are deterministic, i.e.,
they only depend on the probability space, but not the individual
event $\omega\in \Omega$. A practical interpretation of this approach
is that the optimal location of controllers is computed by solving
\eqref{eq:optcon_s} in an offline phase, while the optimal controls
$u(\omega)$ are computed in an online phase corresponding to the
particular realization of the random variable $\omega$.

Let us give two application examples for an optimal control formulation
of the form \eqref{eq:J}. First, we consider a problem from
earthquake engineering, where one wants to find locations for active
damping devices (controllers) that shall dampen vibrations
that originate from an unknown earthquake forcing. In this situation,
one can imagine that the optimal controls can be computed in real time
individually for each forcing, but the location of controllers needs
to be decided upfront and should be chosen in an optimal way for all
possible earthquakes. Another application could be to position heaters
in a building to obtain, for instance, a uniform temperature distribution
in the presence of uncertain heat sinks/sources caused, e.g., by
open windows, leaky walls or the presence of people.

\subsection{Related work}
Optimization under uncertainty governed by partial differential
equations has been an active field of research over the last decade.
Various formulations are proposed in the literature, e.g., robust
deterministic optimal control \cite{GarreisUlbrich17,
  AlexanderianPetraStadlerEtAl17, Kouri12,
  KouriSurowiec16} and stochastic control
\cite{BennerOnwuntaStoll16, ChenQuarteroni14,
  TieslerKirbyXiuEtAl12}.  The main focus of this paper is a
stochastic control problem with linear governing equations, but
with the additional requirement that the optimal controls are jointly
sparse.

The interest in sparse optimal control is (1) due to its application
for control device placement and its ability to discover controls with simple
structure \cite{Stadler09, ClasonKunisch12,
  BrunnerClasonFreibergerEtAl12, CaponigroFornasierPiccoli15,
  CasasRyllTroltzsch13, Pieper15}, and (2) due to the interesting non-reflexive
Banach space structure that arises if no Hilbert-space norm term is
added to the objective
\cite{ClasonKunisch12,Casas17,CasasClasonKunisch12}. The stochastic
sparse control problem we study has similarities with
the notion of directional sparsity proposed for optimal control of parabolic
problems \cite{HerzogStadlerWachsmuth12,KunischPieperVexler14}, in which one has to decide
on the sparsity for an entire time stripe.  However, differently from
directional sparsity problems, our stochastic control formulation
requires to decide on the sparsity based on a potentially
high-dimensional integration over the probability space rather than an
integration over the one-dimensional time direction.


The solution methods we propose are related to iteratively reweighted
least squares (IRLS) algorithms, which are used, e.g., in compressive
sensing, image processing and matrix recovery
\cite{CandesWakBoyd2008, DaubechiesDeVoreFornasierEtAl10,
  KarthikMaryam2012}. While IRLS methods have mostly been used in
finite dimensions and in the context of underdetermined problems, they
have recently also been studied for
infinite-dimensional $L^p$ and $\ell^p$ ($p\le 1$) optimization
\cite{ItoKunisch14, ItoKunisch13}. Instead of solving an $L^p$-problem
directly, iterative reweighting algorithms alternate between
solving a simpler (e.g., a quadratic minimization) problem,
and updating a weighting function that enters into this simpler
problem. In this paper we focus on a convex problem, but a significant
challenge is that the optimization variable is defined over physical
{\em and} infinite/high-dimensional random space. Hence, we employ ideas
from iterative reweighting to avoid working with the
high-dimensional optimization variable. The proposed algorithms only
iterate over the reweighting function, which does not depend on the
random variable. Additionally to the first-order IRLS method
\cite{DaubechiesDeVoreFornasierEtAl10}, we propose
a Newton-type algorithm based on
the reweighting formulation and show that it outperforms the classical
IRLS iteration.

To accelerate norm reweighting methods, they have been combined with
an active set method \cite{ItoKunisch13}. This approach also applies
to nonconvex problems, but it requires the computation of norms of the
optimization variable and the corresponding dual variable, similarly
as for Newton-type methods for directional sparsity problems
\cite{HerzogStadlerWachsmuth12}. In the context of control under
high-dimensional uncertainty, these norms are integrals over random
space, making this integration a computational challenge.


Another attempt to accelerate the IRLS
algorithm is to use the conjugate gradient method for
solution of the auxiliary least squares problems that occurs in each iteration
\cite{FornasierPeterRauhutEtAl16}. In \cref{sec:newton}, we propose a
related idea, but instead
of iterative linear solves, we exploit the optimal control
problem structure. Namely, we combine a low-rank operator
approximation and the Sherman-Morrison-Woodbury identity to design
fast (i.e., optimal complexity) iteration algorithms.


\subsection{Contributions and limitations}
The main contributions of this paper are as follows. (1) We propose a
formulation for stochastic optimal controls with shared sparsity in
the presence of uncertain parameters in the governing PDE. We believe
that this formulation is relevant in applications and that it is
interesting from the optimization-under-uncertainty perspective since
the joint sparsity requirement couples the controls for different
uncertain parameters, and from the sparse-control perspective due to
the infinite-dimensional non-differentiable optimization
structure. (2) We propose a Newton-type variant of the
iteratively reweighted least squares (IRLS) minimization algorithm for
the solution of this optimization problem, which is significantly
faster than the classical IRLS method. (3) We present low-rank operator
approximations which allow fast iterations of the IRLS algorithm and
its Newton variant. We also provide bounds for the truncation error
due to these approximations.


Next, we summarize limitations of this work. (1) We restrict ourselves
to linear governing equations with uncertain parameters that
enter linearly in these equations, and our algorithms for stochastic
control with shared sparsity do not allow for control
constraints.  More general problems are a
significant challenge even for optimal control without sparsity
requirements on the controls. (2) The proposed norm reweighting
problem reformulation requires regularization of the
non-differentiable term in the objective and convergence to a truly
sparse solution requires that the regularization parameter is driven
to zero. (3) Our arguments and in particular the low-rank operator
approximation require that the parameter $\alpha$ in
\eqref{eq:optcon_s} is positive. Using $\alpha=0$ does not
allow a Hilbert space formulation and would be more challenging from
the theoretical as well as the computational perspective.


\subsection{Notation}
We consider a probability space
$(\Omega,\mathcal F,\mu)$, i.e., $\Omega$ is the set of events,
$\mathcal F$ a $\sigma$-algebra of sets in $\Omega$, and $\mu$ a
positive normalized measure.  For $1\le p<\infty$ and a Banach space
$(X,\|\cdot\|_X)$, the Bochner space $L_\mu^p(\Omega;X)$ is the
space of Bochner integrable functions $u:\Omega\to X$,
for which $\left(\int_\Omega\|u(\omega)\|^p_X\,d\mu\right)^{1/p}:=\|u\|_{L^p(\Omega,X)}$
is finite. This value has the properties of a norm.
For a domain $\D\subset \mathbb R^d$, $d\in \{1,2,3\}$, we will in
particular use the spaces 
\begin{equation}
\bs V := L_\mu^2(\Omega;L^2(\D)) \text{ and } \bs Y := L_\mu^2(\Omega;Y),
\end{equation}
where $Y\subset H^1(\D)$ is a subspace that can includes Dirichlet
boundary conditions on part of $\partial \D$.  Both, $\bs V$ and $\bs Y$ are
Hilbert spaces with inner products derived from the inner products in
$X$, e.g., the inner product for $\bs V$ is $\langle u,v
\rangle_{\bs V}:=\int_\Omega(u(\omega,\cdot),v(\omega,\cdot))_{L^2(\D)}\,d\mu$
and the induced norm for $u\in \bs V$ is $\|u\|_{\bs V} = \langle u,u
\rangle_{\bs V}^{1/2}$.
For $u\in \bs V$, we will commonly use the notation
\begin{equation}\label{eq:normOmega}
\|u\|_\Omega = \|u\|_\Omega(\bs x) =
\left(\int_\Omega u(\omega,\bs x)^2\,d\mu\right)^{\!1/2},
\end{equation}
which is well-defined due to the isomorphism between Bochner spaces
and spaces defined over the product space $\Omega\times\D$
\cite{HytonenNeervenVeraarWeis16}. In the remainder of this paper, we
use bold letters for function spaces defined over
$\Omega\times\D$, such as $\bs V$ and $\bs Y$.


\section{PDE with linearly-entering uncertain parameters}
The main focus of this paper is on problems where the uncertain
parameters enter linearly in \eqref{eq:state}. We allow for
infinite-dimensional uncertain parameters that follow a Gaussian
distribution $\mu=\mathcal N(m_0, \mathcal C_0)$ over a Hilbert space
$\mathscr H$, where $m_0\in \mathscr H$, and $\mathcal C_0$ is a
self-adjoint, positive definite trace class operator over $\mathscr
H$. We consider $m$ as an $\mathscr H$-valued random variable and with
a slight abuse of notation, we denote realization of this random
variable by the same symbol $m$.  We consider a linear differential
equation of the form
\begin{equation}\label{eq:redstate}
A y = u + f + B m,
\end{equation}
where $A:Y\subset H^1(\D)\mapsto H^{-1}(\D)$ is 
invertible, $B:\mathscr H\mapsto H^{-1}(\D)$ and $f,u\in H^{-1}(\D)$.  The following two
examples fit into this framework.
\begin{example}[Poisson problem with uncertain Robin boundary data] \label{ex1:poisson} As example for an
  equation of the form \eqref{eq:redstate}, we use $\mathscr
  H=L^2(\partial\D_2)$ and consider a problem with inhomogeneous Robin
  boundary condition with uncertain data on $\partial\D_2$:
\begin{subequations}\label{eq:state_linear}
\begin{alignat}{2}
  -\nabla \cdot \left(a(\bs x)\nabla y(\omega,\bs x)\right) &=
  f(\bs x) + u(\bs x) \qquad &&\text{in } \D, \label{eq:state_linear1}\\
  y(\omega,\bs x)  &= 0  &&\text{in }\partial\D_1, \label{eq:state_linear2}\\
  k y(\omega,\bs x) + \left(a(\bs x) \nabla y(\omega,\bs x)\right)\cdot{\bs n} &= m(\omega) &&\text{in
  }\partial\D_2. \label{eq:state_linear3}
  \end{alignat}
\end{subequations}
Here, $a(\bs x)\ge a_0>0$ and $k\ge 0$. For
$k=0$, the Robin condition on $\partial\D_2$ reduces to a Neumann
boundary condition.
\end{example}
\begin{example}[Poisson problem with uncertain right hand side]
  \label{ex1:poisson_rhs} This example coincides with 
  \cref{ex1:poisson} with the exception that the uncertain parameter
  field enters on the right hand side of \eqref{eq:state_linear1}. We
  use $\mathscr H=L^2(\tilde\D)$ with $\tilde\D\subset \D$ an open
  subdomain, and $B:L^2(\tilde\D)\to L^2(\D)$ is the
  extension-by-zero operator. The only difference to the problem above is
  that the term $Bm(\omega)$ is added to the right hand side in
  \eqref{eq:state_linear1}, whereas the right hand side of
  \eqref{eq:state_linear3} is zero.
\end{example}
\begin{example}[Helmholtz problem with uncertain Neumann boundary data]\label{ex1:helmholtz}
  Another example that fits into our framework is a Helmholtz
  problem with uncertain Neumann boundary forcing on $\partial\D_2$:
\begin{subequations}
  \begin{alignat}{2}
    -\Delta y(\omega,\bs x) - \kappa^2y(\omega,\bs x) &= u(\bs x) \qquad &&\text{in } \D, \label{eq:helmhotz1}\\
    y(\omega,\bs x)  &= 0  &&\text{in }\partial\D_1, \label{eq:helmholtz2}\\
    \nabla y(\omega,\bs x)\cdot{\bs n} &= m(\omega) &&\text{in
    }\partial\D_2. \label{eq:helmholtz3}
  \end{alignat}
\end{subequations}
Here, $\Delta$ is the Laplace operator and $\kappa>0$ is the wave number.
\end{example}

Most formulations and solution methods in this paper can
straightforwardly be
extended to generalizations of \eqref{eq:redstate}, e.g.,
to problems
where $u$ is a vector function as in linear elasticity. However,
for simplicity of the presentation, we restrict ourselves to
\eqref{eq:redstate}. Next, we specialize the deterministic and
stochastic optimal control formulations from \cref{sec:deterministic}
and \cref{sec:stochastic} for this linear case.

\subsection{Deterministic sparse optimal control}
Using the linear equation \eqref{eq:redstate}, the deterministic
sparse optimal control problem becomes
\begin{equation}\label{eq:optcon_linear_d}
  \min_{u\in U_{\!\text{ad}}} \frac 12 \int_\Omega \int_\D (A^{-1}u +
  A^{-1}Bm - \hat y_d)^2 \,d\bs{x}\,d\mu
  + \frac\alpha 2 \int_\D u^2\,d\bs{x}
  + \beta\int_\D |u|\,d\bs{x},
\end{equation}
where $\hat y_d = y_d - A^{-1}f$.  Since the integration of a
quadratic form over a Gaussian random variable can be done analytically
\cite[Remark 1.2.9]{DaPratoZabczyk02}, one obtains
\begin{multline}\label{eq:d1}
 \int_\Omega \int_\D (A^{-1}u + A^{-1}Bm - \hat y_d)^2
 \,d\bs{x}\,d\mu \\=
 \int_\D (A^{-1}u + A^{-1}Bm_0 - \hat y_d)^2 \,d\bs{x} +
  \text{Tr}\left(\mathcal C_0^{1/2} B^\star A^{-\star}  A^{-1}B \mathcal C_0^{1/2} \right),
\end{multline}
where $B^\star$ denotes the adjoint of $B$, $ A^{-\star}$ the
adjoint of $A^{-1}$, and $\text{Tr}(\cdot)$ denotes the trace of
an operator.  Note that since $\mathcal C_0$ is trace-class, and $B^\star
A^{-\star} A^{-1}B$ is bounded on $L^2(\Omega)$, the trace in
\eqref{eq:d1} is finite. Since the trace term does not depend on the
control, it can be neglected in the computation of the minimizer.
Thus, the optimal control derived from \eqref{eq:optcon_linear_d} is
equivalently characterized by the minimization problem
\begin{equation}\label{eq:optcon_linear_d1}
  \min_{u\in U_{\!\text{ad}}} \frac 12  \int_\D
  (A^{-1}u  - \tilde y_d)^2 \,d\bs{x} + \frac\alpha 2 \int_\D u^2\,d\bs{x}
  + \beta\int_\D |u|\,d\bs{x},
\end{equation}
where $\tilde y_d = y_d - A^{-1}f + A^{-1}Bm_0$. This is a
deterministic elliptic control problem with $L^1$-control cost, where
the desired state $\tilde y_d$ depends on the mean of the distribution
of $m$. Problems of this form and algorithms for their solution have
been studied for instance in
\cite{Stadler09}, and have been generalized in various
directions \cite{Casas17}. In particular, it is known that
\eqref{eq:optcon_linear_d1} admits a unique optimal control, which is
sparse in the sense that it vanishes on parts of the domain
$\D$. Moreover, this solution can be computed efficiently using a
semismooth Newton algorithm in function space.

\subsection{Stochastic optimal control with shared sparsity}
Next, we consider the stochastic optimal control formulation
\eqref{eq:optcon_s} for the linear governing equation
\eqref{eq:redstate}.
\begin{equation}\label{eq:optcon_linear_s}
  \min_{u\in \bs U_{\!\text{ad}}}
  \frac 12
  \int_\Omega \int_\D ( (A^{-1}u + A^{-1}Bm - \hat y_d)^2 + \alpha u^2)\,d\bs{x}\,d\mu
  +  \beta\int_\D\!
  \left(\int_\Omega u(\omega,\cdot)^2\,d\mu\right)^{\!\! 1/2} \!\! d\bs{x},
\end{equation}
where, as above $\hat y_d = y_d - A^{-1}f$. The added sparsity term is
an infinite-dimensional version of an $\ell^{2, 1}$-norm, used in
finite dimensions to achieve group sparsity structure or matrix
sparsity \cite{NieHuangXiaoChris10, JunShuiJie12}. In general, for
$r,p>0$, the $\ell^{r,p}$-norm
of a two-index array $a_{i,j}$, $1\le i\le n$, $1\le j\le m$ is defined as
\begin{equation*}
	\|a\|_{r, p} \coloneqq \Big[ \sum_{i=1}^n \Big( \sum_{j=1}^m |a_{i,j}|^r \Big)^{\frac p r} \Big]^{\frac 1 p}.
\end{equation*}
In \eqref{eq:optcon_linear_s}, we aim at obtaining shared sparsity
structure amongst
controls for different uncertain parameters. If the integrand vanishes
at a point $\bs x\in \D$, then the controls for almost all random
parameters must vanish at this $\bs x$, resulting in shared sparsity.
We also notice that the sparsifying term
couples the problems for different random variables $\omega$ and thus
one cannot integrate over the random space analytically. The
properties and solution algorithms for \eqref{eq:optcon_linear_s} are
the main focus of this paper. Note that this problem is related to the
directional sparsity formulation for time-dependent optimal control
proposed in \cite{HerzogStadlerWachsmuth12}, with the stochastic space
taking the role of the time direction. One of the main differences
between the directional sparsity and stochastic control with shared
sparsity is the high dimension of the probability space compared to
the one-dimensional time variable. Due to this difference, the
generalized Newton algorithms used to solve directional sparsity
control problems cannot be applied for the solution of
\eqref{eq:optcon_linear_s}. In the next section, we characterize
solutions to the shared sparsity control problem and introduce a
regularized variant of \eqref{eq:optcon_linear_s}.

\section{Properties of shared sparsity stochastic control problem}
\label{sec:properties}
We first introduce the notation $Q:U_{\!\text{ad}}\times \mathscr H\to
\mathbb R$,
\begin{equation*}
Q(u,m):=\frac 12 \int_\Omega \int_\D ( (A^{-1}u + A^{-1}Bm - \hat
y_d)^2 + \alpha u^2)\,d\bs x\,d\mu.
\end{equation*}
First, we summarize necessary and sufficient
optimality conditions for solutions to \eqref{eq:optcon_linear_s}
using the notation introduced in \eqref{eq:normOmega}.
\begin{theorem}\label{thm:optsystem}
  The optimal control problem \eqref{eq:optcon_linear_s} has a unique
  solution $\bar u\in \bs U_{\!\text{ad}}$, characterized by the
  existence of corresponding state $\bar y\in \bs{Y}$, adjoint state $\bar p\in
  \bs{Y}$ and multiplier $\bar \lambda\in \bs V$ such that
\begin{subequations}\label{eq:opts}
\begin{alignat}{2}
  A \bar y - \bar u - f - B m &= 0, \label{eq:opts1}\\
  A^\star \bar p - y_d + \bar y &= 0, \label{eq:opts2}\\
  -\bar p + \alpha \bar u + \beta \bar \lambda + \bar \mu &= 0, \label{eq:opts3}
\end{alignat}
\vspace{-5ex}
\begin{equation}\label{eq:optlambda1}
  \left.
  \begin{array}{lr}
  \bar\lambda(\omega,\bs x) = \displaystyle\frac{\bar u(\omega,\bs x)}{\|\bar u\|_\Omega(\bs{x})} &\text{
    for $\bs{x}\in \D$ with } \|\bar u\|_\Omega(\bs{x})  \not=0 \\[2ex]
  \|\bar \lambda\|_\Omega(\bs{x}) \le 1 &\text{ for $\bs{x}\in \D$ with }
  \|\bar u\|_\Omega(\bs{x}) = 0 
\end{array} \right\} \text{for a.a.\ $\bs x\in \D$},
\end{equation}
\vspace{-3ex}
\begin{alignat}{2}\label{eq:optmu}
  \bar \mu \le 0 \:\text{ if }\: \bar u = a, \quad &\bar \mu \ge 0 \:\text{
    if }\: \bar u = b, \:\: \text{and} \:\: \bar \mu =0 \:\text{ if }\:
  a\le \bar u \le b \:\: \text{a.e.\ in $\D\times \Omega$}.
\end{alignat}
\end{subequations}
\end{theorem}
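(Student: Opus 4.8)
The plan is to treat \eqref{eq:optcon_linear_s} as a convex minimization problem over the Hilbert space $\bs V$ and to combine the direct method with convex subdifferential calculus. First I would establish existence and uniqueness. The feasible set $\bs U_{\text{ad}}$ is nonempty, convex, and closed, and it is bounded in $\bs V$ because the pointwise bounds $a\le u\le b$ together with $a,b\in L^2(\D)$ and the normalization of $\mu$ yield $\|u\|_{\bs V}^2\le\int_\D\max(a^2,b^2)\,d\bs x$. Writing $\mathcal J = Q(\cdot,m)+\beta G$ with $G(u)=\int_\D\|u\|_\Omega(\bs x)\,d\bs x$, the objective is convex and continuous, hence weakly lower semicontinuous, and the term $\tfrac\alpha2\|u\|_{\bs V}^2$ with $\alpha>0$ makes $\mathcal J$ strictly convex. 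A minimizing sequence is bounded, so by reflexivity it has a weakly convergent subsequence whose limit lies in the weakly closed set $\bs U_{\text{ad}}$; weak lower semicontinuity then gives optimality, and strict convexity yields uniqueness of $\bar u$.

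For the optimality system I would use that, for a convex problem, $\bar u$ minimizes $\mathcal J$ over $\bs U_{\text{ad}}$ if and only if there is $g\in\partial\mathcal J(\bar u)$ with $-g\in N_{\bs U_{\text{ad}}}(\bar u)$. The smooth part is handled by computing the G\^ateaux derivative of $Q$: differentiating in a direction $v=v(\omega)$ and using the adjoint $A^{-\star}$ gives $Q'(\bar u)[v]=\int_\Omega\int_\D\big(A^{-\star}(\bar y-y_d)+\alpha\bar u\big)v\,d\bs x\,d\mu$, where $\bar y$ solves the state equation \eqref{eq:opts1} and $A^{-1}\bar u+A^{-1}Bm-\hat y_d=\bar y-y_d$. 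Introducing the adjoint state $\bar p$ through \eqref{eq:opts2}, i.e.\ $A^\star\bar p=y_d-\bar y$, identifies this derivative with $-\bar p+\alpha\bar u$ and simultaneously produces \eqref{eq:opts1}--\eqref{eq:opts2}.

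The core step is the subdifferential of the nonsmooth term. I would write $G$ as an integral functional $G(u)=\int_\D g(u(\cdot,\bs x))\,d\bs x$ with $g(w)=\|w\|_{L^2_\mu(\Omega)}$, exploiting the isomorphism $\bs V\cong L^2(\Omega\times\D)$, and invoke the theory of subdifferentiation of convex integral functionals to interchange subdifferential and integral. This characterizes $\bar\lambda\in\partial G(\bar u)$ pointwise by $\bar\lambda(\cdot,\bs x)\in\partial g(\bar u(\cdot,\bs x))$ for almost all $\bs x$. Since $g$ is the $L^2_\mu(\Omega)$-norm, its subdifferential is the singleton $\{w/\|w\|\}$ when $w\neq0$ and the closed unit ball when $w=0$, which is precisely \eqref{eq:optlambda1}. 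Representing the normal cone $N_{\bs U_{\text{ad}}}(\bar u)$ by the multiplier $\bar\mu$ gives the sign and complementarity conditions \eqref{eq:optmu}, and collecting all contributions in $0\in\partial\mathcal J(\bar u)+N_{\bs U_{\text{ad}}}(\bar u)$ produces the stationarity equation \eqref{eq:opts3}.

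I expect the main obstacle to be the rigorous justification of the pointwise subdifferential formula for $G$: one must verify that $g$ is a normal convex integrand and that the measurable-selection hypotheses permitting the interchange of $\partial$ and $\int_\D$ hold in the Bochner setting, ensuring in particular that $\bar\lambda\in\bs V$ and that the normalization $\bar u/\|\bar u\|_\Omega$ is measurable on $\{\,\|\bar u\|_\Omega>0\,\}$. Because the problem is convex, the conditions are automatically sufficient as well: for any feasible $u$ the subgradient inequality together with $\bar\mu\in N_{\bs U_{\text{ad}}}(\bar u)$ gives $\langle\,\bar p-\alpha\bar u-\beta\bar\lambda,\,u-\bar u\,\rangle\le 0$, hence $\mathcal J(u)\ge\mathcal J(\bar u)$, so no sufficiency argument beyond convexity is required.
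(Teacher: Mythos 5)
Your proposal is correct, and the existence/uniqueness and stationarity parts coincide with the paper's argument (the paper likewise treats the problem as a convex variational inequality over $\bs U_{\!\text{ad}}$, introduces the adjoint via \eqref{eq:opts2}, and represents the normal cone by $\bar\mu$). Where you genuinely diverge is the key step, the characterization \eqref{eq:optlambda1} of $\bar\lambda\in\partial\varphi(\bar u)$ with $\varphi(u)=\int_\D\|u\|_\Omega\,d\bs x$. You propose to invoke the general theory of convex integral functionals (normal integrands, interchange of $\partial$ and $\int_\D$) to reduce to the pointwise subdifferential of the $L^2_\mu(\Omega)$-norm; you correctly flag that verifying the measurable-selection hypotheses in the Bochner setting is the nontrivial part of that route. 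The paper instead proves both implications directly from the definition of the subdifferential: for the forward direction it tests the subgradient inequality with $v=\bar u+\delta\bar\lambda$, $\delta\in L^\infty(\D)$ depending on $\bs x$ only, which yields the pointwise bound $\|\bar\lambda\|_\Omega\le 1$, and then with $v=0$, which forces equality in the Cauchy--Schwarz inequality and hence $\bar\lambda=\bar u/\|\bar u\|_\Omega$ wherever $\|\bar u\|_\Omega\neq 0$; for the converse it splits $\D$ into $\{\|\bar u\|_\Omega=0\}$ and its complement and checks the subgradient inequality on each piece. Your route is more modular and would generalize to other integrands, but it outsources exactly the measurability issue you identify to a cited theorem; the paper's route is elementary and self-contained, handling measurability implicitly through the choice of test functions. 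Both are valid proofs of the theorem.
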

\begin{proof}
We denote the sparsifying term in \eqref{eq:optcon_linear_s} by
$\varphi(u)$, i.e., $\varphi(u) := \int_\D\|u\|_\Omega\,d\bs{x}$.  It
follows from convex analysis \cite{EkelandTemam99} that the
variational inequality
\begin{equation}\label{eq:vari_ineq}
  \langle Q_{u}(\bar u, m), u - \bar u \rangle_\bs{V} + \beta (\varphi(u) -
  \varphi(\bar u)) \geq 0 \quad  \text{for all} \ u \in \bs
  U_{\!\text{ad}},
\end{equation}
is necessary and sufficient for $\bar u$ to be a solution of
\eqref{eq:optcon_linear_s}, where $Q_{u}$ denotes variation of $Q$
with respect to $u$. This is equivalent to
\begin{equation}\label{eq:vari_ineq_2}
  \langle A^{-\star} (A^{-1}\bar u + A^{-1} B m - \hat y_d) + \alpha
  \bar u + \beta \bar \lambda, u - \bar u \rangle_\bs{V} \geq 0 \quad  \text{for all} \
  u \in \bs U_{\!\text{ad}},
\end{equation}
where $\bar\lambda \in \bs{V}$ is an element in the subdifferential
$\partial \varphi(\bar u)$ of $\varphi$ at $\bar u$.  By introducing
the adjoint variable $\bar p$ and a Lagrange multiplier $\bar\mu
\in \bs{V}$ associated with the bound constraints in $\bs
U_{\!\text{ad}}$, \eqref{eq:vari_ineq_2} results in
\eqref{eq:opts1}--\eqref{eq:opts3} and \eqref{eq:optmu}.

It remains to show that $\bar\lambda \in \partial \varphi(\bar u)$ is
equivalent with \eqref{eq:optlambda1}.
Considering $\varphi:\bs{V}\to\mathbb R$, the subdifferential is
defined as
\begin{equation}\label{eq:subdifferential}
\partial \varphi(\bar u) = \{ \lambda \in \bs{V} \mid \langle \lambda, v
- \bar u\rangle_\bs{V} \leq \int_{\D} \left( \| v \|_{\Omega}
  - \| \bar u \|_{\Omega} \right) \,d\bs{x} \ \text{ for any } v \in \bs{V}\}.
\end{equation}
To show equivalence, let us first assume that $\bar\lambda\in
\partial\varphi(\bar u)$. 
Choosing $v := \bar u + \bar\lambda \delta$, where
$\delta \in L^\infty(\D)$, i.e., it only depends on $\bs{x}$, we
obtain:
$\int_{\D} \delta \|\bar\lambda\|^2_{\Omega} \,d\bs{x} = \langle\bar\lambda, \delta\bar\lambda\rangle_\bs{V} \leq \int_{\D} \delta \|\bar\lambda\|_{\Omega} \,d\bs{x}$,
which implies that $\|\bar\lambda\|_{\Omega} \leq 1$. Setting $v = 0$ in
\eqref{eq:subdifferential} shows that
\begin{equation*}
  \int_{\D} \|\bar u\|_\Omega \,d\bs{x} \leq \langle \lambda, \bar u
  \rangle_\bs{V} \leq \int_{\D} \|\bar \lambda\|_{\Omega} \|\bar u\|_{\Omega} \,d\bs{x}
  \leq \int_{\D} \|\bar u\|_{\Omega} \,d\bs{x},
\end{equation*}
where we have used H\"older's inequality for the second estimate.
For $\bs{x}\in \D$ 
with $\|\bar u\|_\Omega(\bs{x}) \not=0$, thus necessarily
$\bar\lambda(\omega,\bs{x}) = {\bar u(\omega,\bs{x})}/{\|\bar
  u\|_\Omega(\bs{x})}$. Thus, we have shown that $\bar\lambda\in
\partial\varphi(\bar u)$ implies that $(\bar u,\bar\lambda)$ satisfies
\eqref{eq:optlambda1}. Conversely, we assume that
$\bar u$ and $\bar\lambda$ satisfy \eqref{eq:optlambda1} and we split $\D$ into $\D_1 = \{ \bs{x} \in \D
\mid \|\bar u\|_{\Omega}(\bs{x}) = 0 \}$ and $\D_2 = \D \setminus
\D_1$. For any $v \in \bs{V}$, we then have on $\D_1$,
\begin{equation}\label{eq:D_1}
  \int_{\D_1} \int_{\Omega} \bar \lambda v \,d\mu d\bs{x} \leq \int_{\D_1}
  \|\bar\lambda\|_{\Omega} \|v\|_{\Omega} \,d\bs{x} \leq \int_{\D_1}
  \|v\|_{\Omega} \, d\bs{x} = \int_{\D_1}
  \|v\|_{\Omega} - \|\bar u\|_{\Omega} \, d\bs{x},
\end{equation}
and on $\D_2$,
\begin{equation}\label{eq:D_2}
  \int_{\D_2} \int_{\Omega} \bar \lambda (v - \bar u) \,d\mu d\bs{x} = \int_{\D_2} \int_{\Omega} \frac{\bar u}{\|\bar u\|_{\Omega}} (v - \bar u) \,d\mu d\bs{x} \leq \int_{\D_2} \|v\|_{\Omega} - \|\bar
  u\|_{\Omega} \, d\bs{x},
\end{equation}
where we have used the Cauchy-Schwartz inequality. Combining
\eqref{eq:D_1} and \eqref{eq:D_2}, we find that
$\bar\lambda\in\partial\varphi(\bar u)$, which ends the proof.
\end{proof}
We now define the following family of regularized control objectives
for $\eps\ge 0$.
\begin{equation}\label{eq:J}
  \mathcal J(u,\eps):= Q(u,m) + \beta\int_\D
  \left(\|u\|_\Omega^2 + \eps^2\right)^{1/2}\,d \bs{x}.
\end{equation}
In particular, the objective in
\eqref{eq:optcon_linear_s} is ${\mathcal J}(u,0)$.
%
A result similar to \cref{thm:optsystem} also holds for the
regularized problem \eqref{eq:J}, where the objective function is now
differentiable.
\begin{corollary}
  The regularized problem \eqref{eq:J} with $\eps>0$ has a unique
  solution $u_\eps\in \bs U_{\!\text{ad}}$, characterized by the
  existence of corresponding state $y_\eps\in \bs{Y}$, adjoint $p_\eps\in
  \bs{Y}$ and multiplier $\lambda_\eps\in \bs{V}$ such that
  \begin{subequations}
    \begin{alignat}{2}
      A y_\eps - u_\eps - f - B m &= 0, \label{eq:opts1reg}\\
      A^\star  p_\eps - y_d + y_\eps &= 0, \label{eq:opts2reg}\\
      -p_\eps + \alpha u_\eps + \beta \frac{u_\eps}{\sqrt{\|u_\eps\|_\Omega^2+\eps^2}} + \mu_\eps &=
      0 \:\:\text{ for a.a.\ $\bs x\in \D$}, \label{eq:opts3reg} \\
      \mu_\eps \le 0 \text{ if } u_\eps = a, \:\: \mu_\eps \ge 0 \text{
        if } u_\eps = b, &\text{ and } \mu_\eps =0 \text{ if }\:
      a\le u_\eps \le b \:\text{ a.e.\ in $\D\times \Omega$}.\label{eq:optmureg}
    \end{alignat}
  \end{subequations}
 \end{corollary}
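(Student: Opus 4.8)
The plan is to mirror the structure of the proof of \cref{thm:optsystem}, now exploiting the fact that for $\eps>0$ the regularized penalty is \emph{differentiable}, so that the subdifferential argument collapses to a single Gâteaux derivative and the inclusion $\bar\lambda\in\partial\varphi(\bar u)$ is replaced by an explicit gradient term. First I would establish existence and uniqueness by the direct method of the calculus of variations. The quadratic part $Q(\cdot,m)$ is strictly convex and coercive on $\bs V$ thanks to the term $\tfrac\alpha2\|u\|_{\bs V}^2$ with $\alpha>0$, while the regularized sparsity term $u\mapsto\int_\D(\|u\|_\Omega^2+\eps^2)^{1/2}\,d\bs x$ is convex and nonnegative (a composition of the convex map $s\mapsto\sqrt{s^2+\eps^2}$ with the seminorm $\|u\|_\Omega$). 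Hence $\mathcal J(\cdot,\eps)$ is strictly convex, weakly lower semicontinuous, and coercive on the nonempty, closed, convex set $\bs U_{\!\text{ad}}$, so a minimizer $u_\eps$ exists, and strict convexity renders it unique.

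Next I would derive the optimality system, the key step being the Gâteaux derivative of the regularized penalty. Writing $\|u\|_\Omega^2(\bs x)=\int_\Omega u(\omega,\bs x)^2\,d\mu$ and differentiating pointwise in $\bs x$, the chain rule gives
\[
\frac{d}{dt}\Big|_{t=0}\bigl(\|u+tv\|_\Omega^2+\eps^2\bigr)^{1/2}
=\frac{\int_\Omega u\,v\,d\mu}{\sqrt{\|u\|_\Omega^2+\eps^2}},
\]
so that the directional derivative of the penalty at $u_\eps$ in direction $v$ equals $\bigl\langle u_\eps/\sqrt{\|u_\eps\|_\Omega^2+\eps^2},\,v\bigr\rangle_{\bs V}$. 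Here $\eps>0$ is essential: it keeps the denominator bounded away from zero, so the expression defines a genuine element of $\bs V$ rather than merely a subgradient as in the $\eps=0$ case of \cref{thm:optsystem}. Combining this with $Q_u(u_\eps,m)=A^{-\star}(A^{-1}u_\eps+A^{-1}Bm-\hat y_d)+\alpha u_\eps$ yields the first-order variational inequality
\[
\Bigl\langle A^{-\star}(A^{-1}u_\eps+A^{-1}Bm-\hat y_d)+\alpha u_\eps
+\beta\frac{u_\eps}{\sqrt{\|u_\eps\|_\Omega^2+\eps^2}},\,v-u_\eps\Bigr\rangle_{\bs V}\ge 0
\]
for all $v\in\bs U_{\!\text{ad}}$, which for a convex differentiable objective is both necessary and sufficient.

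Finally, exactly as in \cref{thm:optsystem}, I would introduce the adjoint state $p_\eps\in\bs Y$ solving \eqref{eq:opts2reg} in order to rewrite $A^{-\star}(A^{-1}u_\eps+A^{-1}Bm-\hat y_d)=-p_\eps$, and introduce a Lagrange multiplier $\mu_\eps\in\bs V$ associated with the pointwise bound constraints defining $\bs U_{\!\text{ad}}$. This converts the variational inequality into the equation \eqref{eq:opts3reg} together with the complementarity relations \eqref{eq:optmureg}, while \eqref{eq:opts1reg} is just the state equation \eqref{eq:redstate}. I expect the only genuine technicality to be the justification of differentiating under the $\bs x$- and $\omega$-integrals in the Bochner-space setting; this is precisely where the uniform lower bound $\sqrt{\|u\|_\Omega^2+\eps^2}\ge\eps>0$ does the work, since it supplies the integrable domination needed to interchange limit and integral and guarantees that the gradient term is a bounded map $\bs V\to\bs V$.
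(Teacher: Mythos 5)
Your proposal is correct and follows exactly the route the paper intends: the corollary is stated without a separate proof precisely because, as the paper remarks, the objective is now differentiable, so the subdifferential inclusion in the proof of \cref{thm:optsystem} is replaced by the explicit Gâteaux derivative $u_\eps/\sqrt{\|u_\eps\|_\Omega^2+\eps^2}$ of the regularized penalty, with existence and uniqueness following from strict convexity and coercivity as in the theorem. Your treatment of the variational inequality, the adjoint state, and the multiplier matches the paper's argument step for step.
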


The next result provides a bound for the difference between minimizers
of \eqref{eq:optcon_linear_s} and its $\eps$-regularized version
with objective \eqref{eq:J}.
\begin{lemma} 
  Let $\D$ be bounded, $\eps>0$ and denote by $\bar u$ the solution to
  \eqref{eq:optcon_linear_s}, i.e., the minimizer of $u\mapsto
  \mathcal J(u,0)$, and by $u_\eps$ the minimizer of $u\mapsto
  \mathcal J(u,\eps)$. Then
  \begin{equation}
    \|\bar u - u_\eps\|_\bs{V}^2 \le \eps\beta\alpha^{-1}|\D|,
  \end{equation}
  where $|\D|$ denotes the volume of $\D$.
\end{lemma}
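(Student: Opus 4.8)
The plan rests on the $\alpha$-strong convexity of $\mathcal J(\cdot,\eps)$ for every $\eps\ge 0$, which is inherited entirely from the control-cost term $\tfrac\alpha2\|u\|_{\bs V}^2$ contained in $Q(u,m)$. Indeed, the misfit part of $Q$ is convex in $u$ as the composition of an affine map with a convex quadratic, and the regularized sparsity term $\beta\int_\D(\|u\|_\Omega^2+\eps^2)^{1/2}\,d\bs x$ is convex as well, so $\mathcal J(\cdot,\eps)-\tfrac\alpha2\|\cdot\|_{\bs V}^2$ is convex. Strong convexity over the convex admissible set $\bs U_{\!\text{ad}}$ then yields, at the minimizer $u_\eps$,
\[\mathcal J(v,\eps)\ge\mathcal J(u_\eps,\eps)+\tfrac\alpha2\|v-u_\eps\|_{\bs V}^2\quad\text{for all }v\in\bs U_{\!\text{ad}},\]
and the analogous inequality for $\eps=0$ with minimizer $\bar u$. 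I would obtain these from the standard convex-combination argument (evaluate $\mathcal J$ along the segment joining the minimizer to $v$, combine the strong-convexity inequality with minimality, divide by the step size and let it tend to zero), which conveniently bypasses both the bound constraints and the nonsmoothness present at $\eps=0$.

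Next I would instantiate each inequality at the \emph{other} minimizer: take $v=\bar u$ in the $\eps$-problem and $v=u_\eps$ in the $\eps=0$ problem. Both minimizers are feasible, so this is legitimate, and adding the two inequalities makes the two copies of $\tfrac\alpha2\|\bar u-u_\eps\|_{\bs V}^2$ combine, leaving
\[\bigl[\mathcal J(\bar u,\eps)-\mathcal J(\bar u,0)\bigr]+\bigl[\mathcal J(u_\eps,0)-\mathcal J(u_\eps,\eps)\bigr]\ge\alpha\,\|\bar u-u_\eps\|_{\bs V}^2.\]
Since the objectives $\mathcal J(\cdot,\eps)$ and $\mathcal J(\cdot,0)$ differ only through the sparsity term, the left-hand side equals $R_\eps(\bar u)-R_\eps(u_\eps)$, where $R_\eps(u):=\beta\int_\D\bigl[(\|u\|_\Omega^2+\eps^2)^{1/2}-\|u\|_\Omega\bigr]\,d\bs x$.

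It then remains to bound this gap of regularizers, which I would do with the elementary pointwise estimate $0\le(t^2+\eps^2)^{1/2}-t\le\eps$ for $t\ge0$, applied with $t=\|u\|_\Omega(\bs x)$. This gives $0\le R_\eps(u)\le\beta\eps|\D|$ uniformly in $u$, hence $R_\eps(\bar u)-R_\eps(u_\eps)\le R_\eps(\bar u)\le\beta\eps|\D|$, and combining with the previous step yields $\alpha\|\bar u-u_\eps\|_{\bs V}^2\le\beta\eps|\D|$, i.e., the asserted bound. The one step that requires genuine care is the symmetric addition of the two strong-convexity inequalities: it is precisely what forces the unknown objective values to cancel and leaves only the controllable regularizer gap. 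The remaining ingredients, namely the strong-convexity-at-minimizer inequality and the scalar estimate, are routine.
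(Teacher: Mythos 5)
Your proof is correct and is essentially the paper's argument in a different guise: the paper cross-tests the two variational inequalities (one for $\bar u$, one for $u_\eps$) at each other's minimizers and sums them, which is precisely the first-order form of your symmetric addition of the two strong-convexity-at-the-minimizer inequalities, and both routes then reduce to the same pointwise estimate $0\le\sqrt{t^2+\eps^2}-t\le\eps$ together with dropping the nonnegative term $-R_\eps(u_\eps)$. The resulting constant $\eps\beta\alpha^{-1}|\D|$ is identical.
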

\begin{proof}
  Since $\bar u$ and $u_\eps$ are the unique minimizers of
  \eqref{eq:J} for $\eps=0$ and $\eps>0$, respectively, we have for
  all $v \in \bs U_{\!\text{ad}}$ that
  \begin{alignat}{2}
    \langle (A^{-\star}A^{-1} + \alpha)\bar u, v-\bar u \rangle_\bs{V} + \beta \int_D\! \left(\|v\|_\Omega -
    \|\bar u \|_\Omega \right)\,d\bs x &\ge -\langle g,v-\bar u \rangle_\bs{V},\label{eq:var_optu*}\\
    \langle (A^{-\star}A^{-1} + \alpha)u_\eps, v-u_\eps\rangle_\bs{V} + \beta \int_D \!
    \left(\sqrt{\|v\|^2_\Omega\!+\!\eps^2} - \sqrt{\|u_\eps\|^2_\Omega\!+\!\eps^2}\right)\,d\bs x
    &\ge - \langle g,v-u_\eps \rangle_\bs{V},\label{eq:var_optueps}
  \end{alignat}
  where $g=A^{-\star}(A^{-1}Bm-\hat y_d)$.  Using $v=u_\eps$ in
  \eqref{eq:var_optu*} and $v=\bar u $ in \eqref{eq:var_optueps}, and
  summing the resulting inequalities yields
  \begin{multline} \label{eq:est1}
    \langle (A^{-\star}A^{-1} + \alpha)(u_\eps-\bar u ),u_\eps-\bar u \rangle_\bs{V} \le\\
    \beta \int_D \left(\|u_\eps\|_\Omega -
    \|\bar u \|_\Omega + \sqrt{\|\bar u \|^2_\Omega+\eps^2} -
    \sqrt{\|u_\eps\|^2_\Omega+\eps^2}\right) \,d\bs x.
  \end{multline}
  The expression under the integral on the right hand side can be
  estimated pointwise:
  \begin{alignat*}{2}
    \|u_\eps\|_\Omega - \sqrt{\|u_\eps\|^2_\Omega+\eps^2} -
    \|\bar u \|_\Omega + \sqrt{\|\bar u \|^2_\Omega+\eps^2}
    &\le
    \sqrt{\|\bar u \|^2_\Omega+\eps^2} -\|\bar u \|_\Omega\\
    &\le \frac{\eps^2}
        {\sqrt{\|\bar u \|^2_\Omega+\eps^2} +\|\bar u \|_\Omega}\le \eps.
  \end{alignat*}
  Integrating this estimate over $\D$ and combination with \eqref{eq:est1}
  proves the result.
\end{proof}

In the next two sections, we introduce a first and a second-order
algorithm for the solution of \eqref{eq:J} without bound
constraints on the control. Both algorithms avoid approximation in
random space using sampling and only iterate over functions defined on the
physical space $\D$. Thanks to the linearity of the governing
equation and the Gaussianity of the uncertain parameter, computations
over the (potentially high-dimensional) random space $\Omega$ can be
performed analytically. In practice, these computations can be
performed efficiently using low-rank operator approximations as
proposed in \cref{sec:lowrank}.


\section{Norm reweighting for shared sparsity control problem}
\label{sec:reweighting}
To develop an efficient algorithm for
\eqref{eq:optcon_linear_s}, we make the simplification $\bs
U_{\!\text{ad}} = \bs V = L_\mu^2(\Omega;L^2(\D))$, i.e., we consider a problem
without bound constraints on the control. The algorithm we propose
below cannot be generalized to incorporate inequality constraints.
Such constraints would destroy the
Gaussianity of the controls in the auxiliary problems we introduce below.
We introduce a family of objective functions that are quadratic in $u$,
involve the parameter $\eps\ge 0$ and a {\em weighting} function $\nu:\D\to
\mathbb R$ with $\nu(\bs x)>0$:
\begin{equation}\label{eq:optcon_linear_sreg}
  \bar{\mathcal J}(u,\nu,\eps):= Q(u,m) + \frac{\beta}{2}\int_\D
  \left({\nu}\|u\|_\Omega^2 + \eps^2\nu + \nu^{-1}\right)\,d\bs x.
\end{equation}
Here, the function $\nu$ weights the term $\|u\|^2_\Omega(\bs x)$,
and the latter two terms, which are not present in
\eqref{eq:optcon_linear_s} will be useful to update the
weighting function $\nu$ in the algorithm presented next. 
Norm reweighting, in the context of under-determined problems also
known as {\em iteratively reweighted least
  squares (IRLS)}, is commonly used to compute
finite-dimensional sparse solutions vectors.  Some of
the analysis presented below extends results from
\cite{DaubechiesDeVoreFornasierEtAl10} to infinite dimensions. Similar
generalizations to infinite dimensions, also for the non-convex case,
are presented in \cite{ItoKunisch14, ItoKunisch13}, where the method
is referred to as {\em monotone algorithm}. The basic idea behind
these methods applied to the stochastic shared sparsity control
problem is presented next.

For a given, monotonously decreasing sequence $(\eps_k)_{k\ge 0}$ with
$\eps_k>0$, the algorithm performs alternate minimization of
$\bar{\mathcal J}(u,\nu,\eps)$ with respect to $u$ and $\nu$.  Given
an initialization $\nu^0$, $\nu_0(\bs x)>0$ for all $\bs x\in \D$, we
compute, for $k\ge 1$, a sequence
of iterates $u^k=u^k(\omega,\bs x)$, $\nu^k(\bs x)$ as follows:
\begin{alignat}{2}
u^{k+1} &= \argmin_{u\in \bs V} \bar{\mathcal J}(u,\nu^k,\eps_k),\label{eq:ad1}\\
\nu^{k+1} &= \argmin_{\nu \in L^\infty(\D)} \bar{\mathcal J}(u^{k+1},\nu,\eps_{k+1}). \label{eq:ad2}
\end{alignat}
Let us first discuss the minimization \eqref{eq:ad2}. Taking
variations of \eqref{eq:optcon_linear_sreg} with respect to $\nu$, and
using that $\nu$ must be positive, \eqref{eq:ad2} implies that
\begin{equation}\label{eq:opt_nu}
\nu^{k+1}(\bs x) = \left(\|u^{k+1}\|^2_\Omega(\bs
x)+\eps^2_{k+1}\right)^{-\frac 12}.
\end{equation}
Note that $\nu^{k+1}$ is a function only of $\bs x\in \D$, and, for
each $\bs x$, it requires integration over the random space $\Omega$.

Since \eqref{eq:ad1} is a strictly convex least squares\footnote{The occurrence of this least square
problem is the reason why this algorithm is referred to as iteratively
reweighted least squares (IRLS) method in the literature
\cite{DaubechiesDeVoreFornasierEtAl10}.}  problem, it
has a unique solution $u^{k+1}$. Taking variations with
respect to $u$, one finds that $u^{k+1}$ is characterized by the
optimality condition
\begin{equation}\label{eq:normaleq}
  \left[A^{-\star}A^{-1} + (\alpha+\beta \nu^k)\right]u^{k+1} =
  A^{-\star} (y_d - A^{-1}(f + B m)).
\end{equation}
After introducing state and adjoint variables $y^{k+1},p^{k+1} \in \bs{Y}$, this
is equivalent to
\begin{alignat*}{2}
  A y^{k+1} - u^{k+1} - f - B m &= 0,\\ 
  A^\star p^{k+1} - y_d + y^{k+1} &= 0,\\ 
  -p^{k+1} + (\alpha + \beta \nu^k) u^{k+1} &= 0. 
\end{alignat*}
%
Next, we observe that using the optimality condition \eqref{eq:opt_nu}
in \eqref{eq:optcon_linear_sreg} yields
\begin{equation}\label{eq:barJ}
  \bar{\mathcal J}(u^k,\nu^k,\eps_k)= Q(u^k,m) + \beta\int_\D
  \left(\|u^k\|_\Omega^2 + \eps^2_k\right)^{1/2}\,d\bs x = \mathcal J(u^k,\eps_k),
\end{equation}
shining light onto the relation between $\mathcal J$ and $\bar{\mathcal J}$.
Note, however, that $u^k$ is in general not a minimizer of $u\mapsto
\mathcal J(u,\eps_k)$.

The alternate minimization property \eqref{eq:ad1}, \eqref{eq:ad2}
shows that the following monotonicity holds for $k=0,1,2,\ldots$,
\begin{alignat}{2}\label{eq:Jmonotone}
\bar{\mathcal J}(u^{k+1},\nu^{k+1},\eps_{k+1}) \le \bar{\mathcal J}(u^{k+1},\nu^{k},\eps_{k+1}) \le
\bar{\mathcal J}(u^{k+1},\nu^k,\eps_k) \le \bar{\mathcal J}(u^k,\nu^k,\eps_k).
\end{alignat}
Here, the first inequality follows from the optimality of $\nu^{k+1}$
for \eqref{eq:ad2}, the second inequality from the definition
\eqref{eq:J} and from $\eps_{k+1}\le \eps_k$, and the last inequality
from the optimality of $u^{k+1}$ for \eqref{eq:ad1}.\footnote{This monotonicity
property of the algorithm is the reason why this class of algorithms
are referred to as monotone algorithms in \cite{ItoKunisch14,
  ItoKunisch13}.} The iterates of the algorithm satisfy a boundedness
property summarized in the next lemma.
\begin{lemma}\label{lemma:bounded}
Let $\eps_k$, $k=0,1,\ldots$ be a non-increasing sequence of positive
numbers, and $(u^0,\nu^0)$ a given initialization. Then, the iterates
$u^k$ satisfy
\begin{equation}\label{eq:sum_bounded}
\sum_{k=0}^\infty\int_\Omega  \int_\D (u^k - u^{k+1})^2\,d\bs x\,d\mu
< \infty.
\end{equation}
\end{lemma}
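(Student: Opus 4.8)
The plan is to exploit the monotonicity chain \eqref{eq:Jmonotone} together with the fact that the subproblem \eqref{eq:ad1} minimizes a strictly convex quadratic whose coercivity constant does not depend on $k$. First I would set $a_k := \bar{\mathcal J}(u^k,\nu^k,\eps_k)$ and note that \eqref{eq:Jmonotone} shows $(a_k)$ is non-increasing. Since $Q(u,m)\ge 0$ and, for $\nu>0$, each term $\nu\|u\|_\Omega^2$, $\eps^2\nu$, $\nu^{-1}$ appearing in the reweighted penalty of \eqref{eq:optcon_linear_sreg} is nonnegative, we have $\bar{\mathcal J}\ge 0$, so $(a_k)$ is bounded below. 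Consequently the telescoping sum is finite: $\sum_{k=0}^\infty (a_k - a_{k+1}) = a_0 - \lim_{k} a_k \le a_0 < \infty$.

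Next I would quantify a single decrease $a_k - a_{k+1}$. Dropping the first two inequalities in \eqref{eq:Jmonotone}, we have $a_k - a_{k+1} \ge \bar{\mathcal J}(u^k,\nu^k,\eps_k) - \bar{\mathcal J}(u^{k+1},\nu^k,\eps_k)$. The map $u\mapsto \bar{\mathcal J}(u,\nu^k,\eps_k)$ is a quadratic whose associated self-adjoint operator is $M_k := A^{-\star}A^{-1} + \alpha + \beta\nu^k$, and $u^{k+1}$ from \eqref{eq:ad1} is its minimizer, so its gradient vanishes there. For such a quadratic the exact second-order expansion about the minimizer gives the identity (no cross term survives)
\[
  \bar{\mathcal J}(u^k,\nu^k,\eps_k) - \bar{\mathcal J}(u^{k+1},\nu^k,\eps_k) = \tfrac 12\langle M_k(u^k - u^{k+1}),\, u^k - u^{k+1}\rangle_{\bs V}.
\]

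The crucial point is the uniform lower bound $\langle M_k w, w\rangle_{\bs V} = \langle A^{-\star}A^{-1}w, w\rangle_{\bs V} + \alpha\|w\|_{\bs V}^2 + \beta\int_\D \nu^k\|w\|_\Omega^2\,d\bs x \ge \alpha\|w\|_{\bs V}^2$, valid for every $k$ because $\alpha>0$ is fixed while the first and third terms are nonnegative. Combining with the previous identity gives $a_k - a_{k+1} \ge \tfrac\alpha 2 \|u^k - u^{k+1}\|_{\bs V}^2 = \tfrac\alpha 2\int_\Omega\int_\D (u^k - u^{k+1})^2\,d\bs x\,d\mu$; summing over $k$ and using the telescoping bound yields $\tfrac\alpha 2\sum_{k} \int_\Omega\int_\D(u^k-u^{k+1})^2\,d\bs x\,d\mu \le a_0 < \infty$, which is \eqref{eq:sum_bounded}.

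The main obstacle to watch is the uniformity of the coercivity constant: the argument would fail if the strong-convexity modulus of \eqref{eq:ad1} degenerated as $k\to\infty$. Here it does not, precisely because the Tikhonov term $\alpha\|u\|_{\bs V}^2$ (with $\alpha>0$, a standing assumption of the paper) supplies a $k$-independent lower bound, while the reweighting multiplier $\beta\nu^k>0$ can only increase $\langle M_k w, w\rangle_{\bs V}$. A secondary detail to check carefully is that the quadratic expansion about the minimizer is an exact identity rather than merely an inequality, since it is this exactness that lets the finite telescoping sum control the entire series \eqref{eq:sum_bounded}.
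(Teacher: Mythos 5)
Your proposal is correct and follows essentially the same route as the paper's proof: telescoping the monotone sequence $\bar{\mathcal J}(u^k,\nu^k,\eps_k)$, using the exact quadratic Taylor expansion at the minimizer $u^{k+1}$ (where the first-order term vanishes by \eqref{eq:ad1}), and bounding the quadratic form below by $\tfrac{\alpha}{2}\|u^k-u^{k+1}\|_{\bs V}^2$. Your explicit remarks on the boundedness below of $\bar{\mathcal J}$ and the $k$-uniformity of the coercivity constant are details the paper leaves implicit, but the argument is the same.
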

\begin{proof}
The following estimate holds:
\begin{alignat*}{2}
&\:\bar{\mathcal J}(u^{k},\nu^{k},\eps_{k}) - \bar{\mathcal
  J}(u^{k+1},\nu^{k+1},\eps_{k+1})\\
  \ge & \:\bar{\mathcal
  J}(u^{k},\nu^{k},\eps_{k}) - \bar{\mathcal J}(u^{k+1},\nu^{k},\eps_k)\\
  \ge & \: 
  \frac 12 \int_\Omega  \int_\D (A^{-1}(u^k-u^{k+1}))^2 + (\alpha +
  \beta \nu^k) (u^k - u^{k+1})^2\,d\bs x\,d\mu\\
  \ge & \: \frac \alpha 2 \int_\Omega  \int_\D (u^k - u^{k+1})^2\,d\bs x\,d\mu.
\end{alignat*}
Here, the first inequality uses \eqref{eq:Jmonotone}, and the second
inequality follows from a Taylor expansion of $u\mapsto
\bar{\mathcal J}(u,\nu^k,\eps_k)$ at $u^{k+1}$ in the direction
$u^k-u^{k+1}$, in which due to \eqref{eq:ad1} the first-order term
vanishes, i.e. $\langle u^{k+1}, u^k - u^{k + 1} \rangle_\bs{V} = 0$.
Summing the above estimate over $k$ proves
 \eqref{eq:sum_bounded}.
\end{proof}
Note that the above result implies that, in particular,
\begin{equation*}
 \int_\Omega  \int_\D (u^k - u^{k+1})^2\,d\bs x\,d\mu \to 0 \quad
 \text{ as } k\to \infty.
\end{equation*}
However, \cref{lemma:bounded} does not imply convergence of the
IRLS algorithm when $\eps_k\to 0$. The next result provides a
convergence result for the case that $\eps_k\to\bar\eps>0$.
\begin{lemma}\label{lemma:conv_epsbar}
Let $\eps_k, k=1,2,\ldots$ be a non-increasing sequence with
$\lim_{k\to\infty} \eps_k=\bar\eps>0$. Then, for any initialization
$(u^0,\nu^0)$, 
$  u^k \to u_{\bar\eps}$  strongly in $\bs V$ as $k\to \infty$.
\end{lemma}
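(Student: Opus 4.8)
The plan is to exploit the strong convexity of the limit functional together with the strong monotonicity of the part of the optimality system that is common to all iterates. Since $\bar\eps>0$, the map $u\mapsto\mathcal J(u,\bar\eps)$ from \eqref{eq:J} is Fr\'echet differentiable and $\alpha$-strongly convex (the quadratic term in $Q$ contributes $\tfrac\alpha2\|u\|_{\bs V}^2$, and the regularized sparsity term is convex), so $u_{\bar\eps}$ is its unique minimizer and satisfies the unconstrained optimality system \eqref{eq:opts3reg} with $\mu_\eps\equiv0$. Strong convexity gives $\tfrac\alpha2\|u-u_{\bar\eps}\|_{\bs V}^2\le\mathcal J(u,\bar\eps)-\mathcal J(u_{\bar\eps},\bar\eps)$, so it would suffice to show that $(u^k)$ is a minimizing sequence; I will instead argue convergence directly from the optimality conditions, which avoids a circular dependence on the limit of the weights.

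First I would collect the available a priori information. The energy $\bar{\mathcal J}(u^k,\nu^k,\eps_k)=\mathcal J(u^k,\eps_k)$ is nonincreasing by \eqref{eq:Jmonotone} and bounded below, hence convergent; together with $Q(u,m)\ge\tfrac\alpha2\|u\|_{\bs V}^2$ and nonnegativity of the reweighted term, this shows $(u^k)$ is bounded in $\bs V$. Because $(\eps_k)$ is nonincreasing with limit $\bar\eps$, we have $\eps_k\ge\bar\eps$ and therefore, by \eqref{eq:opt_nu}, the uniform bound $0<\nu^k\le\bar\eps^{-1}$ in $L^\infty(\D)$. Finally, \cref{lemma:bounded} yields $\|u^{k+1}-u^k\|_{\bs V}\to0$.

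Next I would subtract optimality systems. By \eqref{eq:normaleq}, each $u^{k+1}$ solves $(A^{-\star}A^{-1}+\alpha)u^{k+1}+\beta\nu^k u^{k+1}=g$ with $g=A^{-\star}(y_d-A^{-1}(f+Bm))$, while $u_{\bar\eps}$ solves the same equation with $\nu^k$ replaced by $\bar\nu:=(\|u_{\bar\eps}\|_\Omega^2+\bar\eps^2)^{-1/2}$. Writing $\bar\nu^{k+1}:=(\|u^{k+1}\|_\Omega^2+\bar\eps^2)^{-1/2}$ and testing the difference of these equations with $u^{k+1}-u_{\bar\eps}$, the coercivity $A^{-\star}A^{-1}+\alpha\ge\alpha I$ gives $\alpha\|u^{k+1}-u_{\bar\eps}\|_{\bs V}^2\le\beta\int_\D(\bar\nu^{k+1}-\nu^k)\,\langle u^{k+1},u^{k+1}-u_{\bar\eps}\rangle_\Omega\,d\bs x$, where $\langle\cdot,\cdot\rangle_\Omega$ denotes the fiberwise $L^2(\Omega)$-pairing. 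Since $s\mapsto(s+\eps^2)^{-1/2}$ is globally Lipschitz with constant $\tfrac12\bar\eps^{-3}$ for $\eps\ge\bar\eps$, one has pointwise in $\bs x$ the bound $|\bar\nu^{k+1}-\nu^k|\le\tfrac12\bar\eps^{-3}\big(|\eps_k^2-\bar\eps^2|+\|u^{k+1}-u^k\|_\Omega(\|u^{k+1}\|_\Omega+\|u^k\|_\Omega)\big)$. The $|\eps_k^2-\bar\eps^2|$ contribution is bounded by $C|\eps_k^2-\bar\eps^2|\to0$ using only the $\bs V$-bounds on $u^{k+1}$ and $u^{k+1}-u_{\bar\eps}$, so the crux is the iterate-shift term.

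The hard part will be controlling $\int_\D\|u^{k+1}-u^k\|_\Omega(\|u^{k+1}\|_\Omega+\|u^k\|_\Omega)\|u^{k+1}\|_\Omega\|u^{k+1}-u_{\bar\eps}\|_\Omega\,d\bs x$, an integral of a product of four fiberwise norms. The $\bs V$-bounds only place these factors in $L^2(\D)$, so the product need not be integrable; this reflects the fact that $\bs V$-weak convergence gives no pointwise control of the fiber norms $\|u^k\|_\Omega$, and there is no compactness in the $\Omega$-direction because $A^{-\star}A^{-1}$ acts only in space. The role of $\bar\eps>0$ is precisely to make the reweighting nondegenerate; what remains is a uniform spatial bound $\sup_k\|\,\|u^k\|_\Omega\,\|_{L^\infty(\D)}\le R<\infty$, which I would try to obtain from regularity of the data $g$ propagated through the operators $M_{\nu^k}:=A^{-\star}A^{-1}+\alpha+\beta\nu^k$, which are uniformly coercive and uniformly bounded, $\alpha I\le M_{\nu^k}\le A^{-\star}A^{-1}+(\alpha+\beta\bar\eps^{-1})I$. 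Given such an $R$, the iterate-shift term is dominated by $CR^2\|u^{k+1}-u^k\|_{\bs V}\|u^{k+1}-u_{\bar\eps}\|_{\bs V}$, whence $\alpha\|u^{k+1}-u_{\bar\eps}\|_{\bs V}\le CR^2\|u^{k+1}-u^k\|_{\bs V}+C'|\eps_k^2-\bar\eps^2|^{1/2}\to0$, giving strong convergence of the whole sequence. As a fallback avoiding the $L^\infty$ bound, I would extract a weak limit $u^{k_j}\rightharpoonup u^*$ (so also $u^{k_j+1}\rightharpoonup u^*$ and $\nu^{k_j}\rightharpoonup^*\nu^*$), pass to the limit in \eqref{eq:normaleq} using the uniform weight bound and weak lower semicontinuity of the convex energy to show $u^*$ is stationary for $\mathcal J(\cdot,\bar\eps)$, identify $u^*=u_{\bar\eps}$ by uniqueness, and conclude convergence of the full sequence since the limit is subsequence-independent, with strong convergence following from energy convergence and strong convexity.
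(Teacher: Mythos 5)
Your overall strategy---optimality conditions plus $\alpha$-strong monotonicity plus the vanishing increments from \cref{lemma:bounded}---is the same as the paper's, and the preliminary facts you collect ($\nu^k\le\bar\eps^{-1}$ since $\eps_k\ge\bar\eps$, boundedness of $(u^k)$ in $\bs V$, $\|u^{k+1}-u^k\|_{\bs V}\to 0$) are correct and are exactly the right ingredients. The gap is the step you yourself flag as the ``hard part,'' and neither of your proposed repairs closes it. By subtracting the system for $u^{k+1}$ (weight $\nu^k$ built from $u^k$ and $\eps_k$) from the system for $u_{\bar\eps}$ (weight built from $u_{\bar\eps}$ and $\bar\eps$) and then splitting the weight discrepancy into $|\bar\nu^{k+1}-\nu^k|$ times fiber norms of the controls, you manufacture an integrand that is a product of up to four functions each known only to lie in $L^2(\D)$. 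The uniform bound $\sup_k\|\,\|u^k\|_\Omega\|_{L^\infty(\D)}<\infty$ that would rescue this is only conjectured (``which I would try to obtain from regularity''), not proved, and is not available for general $y_d\in L^2(\D)$. The weak-compactness fallback also does not go through as sketched: to pass to the limit in \eqref{eq:normaleq} you must identify the weak limit of the product $\nu^{k_j}u^{k_j+1}$ of a weak-$*$ convergent and a weakly convergent sequence, which requires strong convergence of one factor---precisely what is to be proved---and the lower-semicontinuity route would additionally need $(u^k)$ to be a minimizing sequence for $\mathcal J(\cdot,\bar\eps)$, which monotone decrease of $\mathcal J(u^k,\eps_k)$ alone does not provide.

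The difficulty is avoidable, and the paper's proof shows how: rather than comparing $u^{k+1}$ with $u_{\bar\eps}$, estimate the gradient residual $\mathcal J_u(u^k,\bar\eps)=-p^k+\alpha u^k+\beta\bar\nu^k u^k$ at the iterate $u^k$, with $\bar\nu^k=(\|u^k\|_\Omega^2+\bar\eps^2)^{-1/2}$, against the optimality condition $-p^{k+1}+\alpha u^{k+1}+\beta\nu^k u^{k+1}=0$ of $u^{k+1}$. Both weights $\bar\nu^k$ and $\nu^k$ are then built from the \emph{same} $\|u^k\|_\Omega$ and differ only through $\bar\eps$ versus $\eps_k$, so $|\bar\nu^k-\nu^k|\le\tfrac12\bar\eps^{-3}|\eps_k^2-\bar\eps^2|$ uniformly in $\bs x$, while the remaining terms are bounded by constants times $\|u^k-u^{k+1}\|_{\bs V}$; strong monotonicity then yields $\alpha\|u^k-u_{\bar\eps}\|_{\bs V}\le\|\mathcal J_u(u^k,\bar\eps)\|_{\bs V}\to 0$. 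Alternatively, your own route can be salvaged without any $L^\infty$ bound by not separating the weight from the control: the fiberwise map $w\mapsto w(\|w\|_{L^2(\Omega)}^2+\eps^2)^{-1/2}$ is globally Lipschitz on $L^2(\Omega)$ with constant $\eps^{-1}\le\bar\eps^{-1}$, which gives $\|\bar\nu^{k+1}u^{k+1}-\nu^k u^k\|_{\bs V}\le\bar\eps^{-1}\|u^{k+1}-u^k\|_{\bs V}+C|\eps_k^2-\bar\eps^2|\,\|u^k\|_{\bs V}$ and removes the quartic product entirely.
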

\begin{proof}
  As above, we denote by $p^k\in \bs V$ the adjoint variable corresponding to
  $u^k$.  We consider the derivative of $\mathcal J$ with respect to
  $u$, $\mathcal
  J_{u}(u^k,\bar \eps) = -p^k + \alpha u^k + \beta \bar\nu^k u^k$, where
  $\bar\nu^k = (\|u^k\|^2_\Omega+\bar\eps^2)^{-1/2}$. Then,
  \begin{equation*}
    \begin{split}
    \|-p^k + \alpha u^k + \beta \bar\nu^k u^k\|_\bs{V} 
    =\|-(p^k-p^{k+1}) + \alpha(u^k-u^{k+1}) + \beta(\bar\nu^ku^k-\nu^ku^{k+1})\|_\bs{V} \\
    \le \|p^k-p^{k+1}\|_\bs{V} + \alpha\|u^k-u^{k+1}\|_\bs{V}
    +\beta\bar\eps^{-1}\|u^k-u^{k+1}\|_\bs{V} \\
 + \beta\bar\eps^{-2}\left\|u^{k+1}\left(\sqrt{\|u^k\|_\Omega^2+\bar\eps^2}- 
 \sqrt{\|u^k\|_\Omega^2+\eps_k^2}\right)\right\|_\bs{V}, 
    \end{split}
  \end{equation*}
  where we used that $-p^{k+1}+\alpha u^{k+1} +
  \nu^k u^{k+1} = 0$ in the first equality, with $u^{k+1}$ is the
  minimizer of $\bar{\mathcal J}(u, \nu^k, \epsilon_k)$, $\nu^k$ 
  defined as \eqref{eq:opt_nu} and the
  assumption $\eps_k\ge \bar\eps$ in the estimation. Using
  \cref{lemma:bounded} and the fact that $\eps_k\to\bar\eps$ implies
  that $J_{u}(u^k,\bar \eps) \to 0$ as $k\to \infty$. Finally, since
  $\|J_{u}(u^k,\bar \eps)\|_\bs{V} \ge \alpha\|u^k-u_{\bar\eps}\|_\bs{V}$, we
  obtain the postulated convergence result.
\end{proof}

\section{Newton method for reweighted shared sparsity control problem}
\label{sec:newton}
Rather than using optimization objectives that depend on the
control $u\in \bs V$, or both on the control $u$ and the weighting
function $\nu$, here we propose a reduced objective that only depends
on $\nu$ (and on $\eps$). This objective considers
$u$ as a function of $\nu$ and thus 
a numerical scheme for this
reduced formulation only requires iterations for $\nu$.
The optimality condition with respect to $u$ in
\eqref{eq:optcon_linear_sreg} shows that
\begin{equation}\label{eq:random_to_physical}
u = S_{\nu} v,
\end{equation}
where $S_\nu:\bs V\to \bs V$ and $v$ are defined as
\begin{equation}\label{eq:operator_S_nu}
  S_{\nu} := \left[A^{-\star}A^{-1} + (\alpha+\beta \nu )\right]^{-1},
  \quad
  v := A^{-\star} (y_d - A^{-1}(f + B m)). 
\end{equation}
Since for every $\nu\in L^\infty(\D)$, $\nu\ge 0$,
\eqref{eq:random_to_physical} has a unique solution $u$, we can
consider $u$ as a function of $\nu$ only, leading to the following
reduced version of \eqref{eq:optcon_linear_sreg},
\begin{align}\label{eq:tildeJ}
  \tilde{\mathcal J}(\nu,\eps)
  &:= Q(S_{\nu} v, m) + \frac{\beta}{2}\int_\D
  \left({\nu}\|S_{\nu} v\|_\Omega^2 + \eps^2\nu + \nu^{-1}\right)\,d\bs x.
\end{align}
Note that this is a non-quadratic functional in
$\nu$. Its derivative 
in a direction $\tilde\nu$ is:
\begin{align*}
\tilde{\mathcal J}_{\nu}(\nu,\eps)(\tilde\nu)
=& \int_\D \left(A^{-\star}(A^{-1} S_{\nu}v + A^{-1} Bm - \hat y_d)
+ \alpha S_{\nu} v + \beta\nu S_\nu v\right) (S_{\nu} v)_{,\nu}
(\tilde \nu)\,d\bs x \\ &+
\frac{\beta}{2} \int_{\D} \left(\|S_{\nu} v\|_\Omega^2 + \eps^2 -
\frac{1}{\nu^2}\right) \tilde \nu \,d \bs x \\
=& \frac{\beta}{2} \int_{\D} \left(\|S_{\nu} v\|_\Omega^2 + \eps^2 - \frac{1}{\nu^2}\right) \tilde \nu \,d \bs x,
\end{align*}
where $(S_{\nu}v)_{,\nu}$ denotes variation of $S_\nu v$ with respect to $\nu$, and
the first term in the second expression vanishes since
$
[A^{-\star} A^{-1} +(\alpha + \beta \nu)]S_{\nu} v - A^{-\star} (\hat
y_d - A^{-1} B m) = 0
$. 
Using the $L^2(\D)$-inner product, the
gradient $\mathcal G$ of $\tilde{\mathcal J}$ with respect to $\nu$ is thus
\begin{equation}\label{eq:G}
\mathcal G(\nu) = \|S_{\nu} v\|_\Omega^2 + \eps^2 - \frac{1}{\nu^2},
\end{equation}
where for simplicity, we neglect to denote the dependence of $\mathcal
G$ on $\eps$.
Note that using \eqref{eq:random_to_physical} and
\eqref{eq:operator_S_nu}, and introducing the control, state and
adjoint variables $u_\eps$, $y_\eps$ and $p_\eps$, the
first-order optimality condition $\mathcal G(\nu)=0$ is equivalent to
the optimality system \eqref{eq:opts1reg}, \eqref{eq:opts2reg},
\eqref{eq:opts3reg} with $\mu_\eps = 0$. This system uniquely
characterizes $u_\eps$, and thus the corresponding $\nu_\eps =
(\|u_\eps\|^2_\Omega + \eps^2)^{1/2}$ satisfies $\mathcal
G(\nu_\eps)=0$ and is thus the unique minimizer of
\eqref{eq:tildeJ}.

A possible choice for an iterative fixed-point method to solve
$\mathcal G(\nu)=0$ with $\eps=\eps_{k+1}>0$ is, for given $\nu^k$,
to compute $\nu^{k+1}$ from
\begin{equation}
 \frac{1}{(\nu^{k+1})^2} = \|S_{\nu^k} v\|_\Omega^2 + \eps_{k+1}^2.
\end{equation}
Taking square roots and reciprocals, we thus rediscover
the IRLS method from \cref{sec:reweighting} as an
iterative fixed point method for solving $\mathcal G(\nu) =
0$. This also implies that the gradient can be computed from the
iterates of the IRLS algorithm as
\begin{equation}\label{eq:grad_reweighting}
\mathcal G(\nu^k) = \frac{1}{(\nu^{k+1})^2} - \frac{1}{(\nu^{k})^2},
\end{equation}
which provides a possible termination criterion for the IRLS algorithm.

Computing second variations of $\tilde{\mathcal J}$ with respect to
$\nu$ in a direction $\tilde \nu$ yields the following Hessian
operator
\begin{equation}\label{eq:Hessian}
\mathcal H(\nu) \tilde \nu=  -  2 \beta \int_\Omega (S_{\nu} v)\odot S_{\nu}
((S_{\nu} v)\odot  \tilde \nu)  \,d\mu +  \frac{2}{\nu^3} \odot \tilde\nu,
\end{equation}
where $v$ as defined in \eqref{eq:operator_S_nu} is a Gaussian random
process, and $\odot$ denotes the pointwise multiplication in space. That
is, for $f,g:\D\to \mathbb R$ and $h:\D\times \Omega\to \mathbb R$,
$(f \odot g)(\bs{x}) := f(\bs{x})g(\bs{x})$ , and $(h \odot g)(\bs{x},\omega) :=
h(\bs{x},\omega)g(\bs{x})$. In the derivation of \eqref{eq:Hessian}, we have
also used that the derivative of $S_{\nu}v$ with respect to $\nu$
satisfies $(S_{\nu} v)_{,\nu}\tilde \nu = - \beta S_{\nu} ((S_{\nu}
v)\odot\tilde \nu)$ for any random draw $v(\omega)$.  Thus, the Newton update step at an iterate
$\nu=\nu^k$ is
\begin{subequations}\label{eq:Newtonstep}
\begin{align}
- 2 \beta\!\!\int_\Omega\!  (S_{\nu^k} v) \odot S_{\nu^k}((S_{\nu^k} v) \odot \delta \nu )\, d\mu +\!  \frac{2}{(\nu^k)^3} \odot \delta \nu\! &=\!
-\|S_{\nu^k} v\|_\Omega^2 - \eps_{k+1}^2 \!+ \!\frac{1}{(\nu^k)^2},\\
\nu^{k+1} &= \nu^k + \delta \nu.
\end{align}
\end{subequations}
Due to the computational cost of integration over the
(possibly high-dimensional) random space, an efficient implementation
of the IRLS algorithm (\cref{sec:reweighting}) and its Newton
variant (\cref{sec:newton}) is
challenging. Hence, we next
propose low-rank operator approximations that make these computations
feasible. We also present estimates for the truncation error of these
approximations and propose a diagonal preconditioner for the Newton
step \eqref{eq:Newtonstep}.

\section{Low-rank operator approximations}\label{sec:lowrank}
%
Since we assume that the (possibly infinite-dimensional) uncertain
parameter $m$ follows a normal distribution,
\eqref{eq:normaleq}, and equivalently \eqref{eq:random_to_physical},
imply that, for given $\nu$, the corresponding control variable also
follows a normal distribution.
To be precise, if the distribution of $m$ is
$\mu=\mathcal N(m_0, \mathcal C_0)$,
then
$ u \sim \mathcal N(u_\nu,\mathcal Q_\nu)$ with
\begin{alignat}{2}\label{eq:ubarCu}
  u_\nu = S_\nu A^{-\star} (y_d - A^{-1}(f + B m_0)), \quad
  \mathcal Q_\nu = S_{\nu}A^{-\star}A^{-1}B\mathcal C_0 B^\star A^{-1}A^{-\star}S_\nu^\star ,
\end{alignat}
where $S_\nu$ is defined as in \eqref{eq:operator_S_nu}.
Here, $S_\nu^\star$ is the adjoint operator of
$S_\nu$ with respect to the $L^2$-inner product, but since $S_\nu$ is
self-adjoint, $S_\nu=S_\nu^\star$.

%
In this section, we develop a method that exploits operator properties
to enable the efficient implementation of the algorithms from
\cref{sec:reweighting,sec:newton}.  In particular, we use
properties that are typical for instance for inverse elliptic PDE
operators, to
construct low-rank operator approximations. Moreover, we provide
estimates for the resulting errors in terms of the truncated
eigenvalues of the low-rank approximations.

\subsection{Spectral decomposition of $\mathbf {A^{-\star}A^{-1}}$ and truncation error analysis}
\label{subsec:truncation}

We make the assumption that the symmetric and positive definite
solution operator $A^{-\star}A^{-1}$ is a trace class operator, and
thus its spectrum is rapidly decaying. This can be explored to enable
fast computations based on low-rank approximations of the operator $S_\nu$,
as discussed next. We denote by $D_\nu := (\alpha +
\beta\nu) I$, and assume we have a spectral decomposition of
$A^{-\star}A^{-1}$ with decreasing eigenvalues $\lambda_i$ and
corresponding eigenvectors $u_i$, $i\ge 1$. Thus,
\begin{equation}
S_\nu = (A^{-\star}A^{-1} + D_\nu)^{-1} =
(U\Lambda U^\star  + D_\nu)^{-1},
\end{equation}
where
$U^\star v = \left(\langle u_i,v\rangle_{L^2(\D)}\right)_{i\ge 1} \in
\ell_2$ for  $v\in L^2(\D)$ and
$U\bs y = \sum_{i=1}^\infty y_iu_i \text{ for } \bs y = (y_i)_{i\ge 1}\in
\ell_2$,
that is, $U$ and $U^\star$ are operators corresponding to a change of basis.
Moreover, $\Lambda$ is a diagonal operator with entries $\lambda_i$.
To approximate $(U\Lambda U^\star + D_\nu)^{-1}$, in
the following, we will truncate the
eigenvalue expansion of $A^{-\star}A^{-1}$. However, first we use the
Sherman-Morrison-Woodbury formula and find
\begin{alignat}{2}\label{eq:SMW}
  S_\nu = (D_\nu + U\Lambda U^\star  )^{-1} 
  &= D_\nu^{-1} - D_\nu^{-1}(U(\Lambda^{-1} + U^\star D_\nu^{-1}U)^{-1} U^\star )D_\nu^{-1}.
\end{alignat}
To show how to control the approximation error resulting from eigenvalue
truncation, we next derive an upper bound for the positive definite
operator $S_\nu$. To compare positive definite operators $E$
and $F$, we say that $E\preccurlyeq F$ if $F-E$ is positive
semidefinite. Then, because $\nu$ is positive, we have $U^\star
D_\nu^{-1}U\preccurlyeq \alpha^{-1}I$,
which implies that
\[
\Lambda^{-1} + \big(U^\star D_\nu^{-1}U\big)
\preccurlyeq \Lambda^{-1} + \alpha^{-1} I = \text{diag}\Big( \frac{\lambda_i + \alpha}{\alpha \lambda_i} \Big).
\]
Consequently,
\[
\left(\Lambda^{-1} + \big(U^\star D_\nu^{-1}U\big)\right)^{-1}
\succcurlyeq \text{diag}\Big(\frac{\alpha\lambda_i}{\lambda_i + \alpha}\Big).
\]
Along with \eqref{eq:SMW}, we conclude
\begin{alignat*}{2}
  (D_\nu + U\Lambda U^\star  )^{-1} \preccurlyeq
  D_\nu^{-1} - D_\nu^{-1}\left(U  \text{diag}\Big(\frac{\alpha\lambda_i}{\lambda_i + \alpha}\Big)   U^\star \right)D_\nu^{-1}.
\end{alignat*}
Let us now consider an approximation of $A^{-\star}A^{-1}$ obtained by
truncation of the eigenvalue expansion after the $r$ largest
eigenvalues.  The corresponding truncated analogues of $U$
and $\Lambda$ are denoted by $U_r$ and $\Lambda_r$, respectively. Then,
\begin{subequations}\label{eq:truncation}
\begin{alignat}{2}
  S_\nu &= (D_\nu + U_r\Lambda_r U_r^\star
  )^{-1} + R, \label{eq:truncation1}\\
   \: \text{where }\quad \displaystyle\frac{\text{Tr}(R)}{\text{Tr}(S_\nu)} &\le  \frac{\sum_{i=r+1}^\infty\frac{\lambda_i}{\lambda_i + \alpha}}{\sum_{i=1}^\infty\frac{\lambda_i}{\lambda_i + \alpha}}.\label{eq:truncation2}
\end{alignat}
\end{subequations}
Here, $\text{Tr}(\cdot)$ denotes the operator trace.
This shows that the contribution to the truncation error is small for
eigenvalues that are small compared to $\alpha$. Note that the
truncation error depends on $r$. This truncation error can be made arbitrary small
by choosing $r$ large enough.
This is of
practical importance as it provides guidance on where to truncate the
eigenvalue expansion. We obtain the following approximation
$S_{\nu,r}$ of $S_\nu$ as in \eqref{eq:SMW}.
\begin{alignat}{2}\label{eq:Strunc}
  S_{\nu,r}:=(D_\nu + U_r\Lambda_r U_r^\star)^{-1}
  = D_\nu^{-1} - D_\nu^{-1}(U_r(\Lambda_r^{-1} + U_r^\star D_\nu^{-1}U_r)^{-1} U_r^\star )D_\nu^{-1}.
\end{alignat}
%
Note that the proposed algorithms will only require application of
this operator to vectors, which can be done efficiently as will be discussed
in \cref{subsec:compcost}.  To summarize, for given $\nu$, the
corresponding optimal controls are normally distributed.  Given a
truncated eigenvalue expansion of $A^{-\star}A^{-1}$, this
distribution can be approximated replacing $S_\nu$ by $S_{\nu,r}$ in
\eqref{eq:ubarCu}. In the remainder of this section, we derive
analogues for the gradient and Hessian of $\tilde {\mathcal J}$
building on the approximation $S_{\nu,r}$.

\subsection{Gradient and IRLS using low-rank approximation}\label{subsec:comp_nu}
After characterizing the distribution of the optimal controls $u$,
both the IRLS algorithm and its Newton variant require computation of
$\|u\|_\Omega$, which
involves integration over random space.
To be precise, $\|u\|_\Omega = \|u\|_\Omega(\bs x)$ is an integration
over the Gaussian distribution $\mathcal N(u_\nu,\mathcal
Q_\nu)$ with mean and covariance defined in \eqref{eq:ubarCu}, where
$S_\nu$ is replaced by $S_{\nu,r}$.

To approximate integration over the random space, we use the square
root of $\mathcal Q$ given by
$ \mathcal Q^{1/2} = S_{\nu}A^{-\star}A^{-1}B\mathcal C_0^{1/2}.$
Typical properties of $\mathcal C_0$ and $B$, as well as the trace
class property of $A^{-\star}A^{-1}$ facilitate the approximation of
$A^{-\star}A^{-1}B\mathcal C_0^{1/2}$ with rank-$\tilde r$ operators
$E_{\tilde r}F^\star_{\tilde r} = [e_1,\ldots,e_{\tilde r}]
[f_1,\ldots,f_{\tilde r}]^\star$ as follows:
\begin{equation}\label{eq:E}
  E_{\tilde r}F^\star_{\tilde r} \approx A^{-\star}A^{-1}B\mathcal C_0^{1/2}.
\end{equation}
Here, $e_i\in L^2(\D)$ and $f_i\in \mathscr H$ which we can choose
such that $f_1,\ldots,f_{\tilde r}$ are orthonormal.
This results in the approximation
\begin{equation}
  \mathcal Q = \mathcal{Q}^{1/2} (\mathcal{Q}^{1/2})^\star \approx S_{\nu,r}E_{\tilde r} E^\star_{\tilde r} S_{\nu,r}^\star.
\end{equation}
Thus, $\|u\|^2_\Omega$ can be approximated by
$u_\nu(\bs x)^2 + \sum_{i=1}^{\tilde r} (S_{\nu,r}
e_i)(\bs x)^2$ for $\bs x\in \D$.
Defining
\begin{equation}\label{eq:e0}
  e_0 := A^{-\star} (y_d - A^{-1}(f + B m_0))
\end{equation}
and recognizing that $u_\nu=S_{\nu,r}e_0$ allows the more compact notation
\begin{equation}\label{eq:variation_appro}
  \|u\|^2_\Omega(\bs x) \approx  \|u\|^2_{\Omega,r}(\bs x) := \sum_{i=0}^{\tilde r} (S_{\nu,r} e_i)(\bs{x})^2.
\end{equation}
%
Using this in \eqref{eq:G}, we find the following approximation
$\mathcal G_r(\nu)$ of the gradient of $\tilde{\mathcal J}$:
\begin{equation}\label{eq:Gred}
  \mathcal G(\nu) \approx \mathcal G_r(\nu) :=
  \sum_{i=0}^{\tilde r} (S_{\nu,r}e_i)^2 +
\eps^2 - \frac{1}{\nu^2}
\end{equation}
and a very similar expression to update the reweighting function in
the IRLS method.

\subsection{Hessian using low-rank approximation}
We now derive expressions for the application of the Hessian corresponding to the
gradient $\mathcal G_r(\nu)$ to vectors.
%
We do this by taking derivatives of
$\mathcal G_r(\nu)$ with respect to $\nu$ in a direction $\delta
\nu$. This results in the following Hessian $\mathcal H_r$ based on the
low-rank approximation,
\begin{align*}
  \mathcal H(\nu) \delta \nu \approx
  \mathcal H_r(\nu) \delta \nu&:=  - 2 \beta
  \sum_{i=0}^{\tilde r} (S_{\nu, r} e_i) \odot S_{\nu, r}
((S_{\nu} e_i) \odot \delta \nu) +  \frac{2}{\nu^3} \odot \delta \nu,
\end{align*}
where we have used the identity
$(S_{\nu,r} w)_{,\nu}\tilde\nu = - \beta S_{\nu,r} ((S_{\nu,r} w)\odot\tilde \nu)$.
To summarize, a Newton step based on the low-rank approximation of
$A^{-\star}A^{-1}$ is as follows:
\begin{subequations}\label{eq:newton_step_appro}
\begin{align}
\mathcal H_r(\nu^k)\delta\nu  &= - \mathcal G_r(\nu^k),\\
\nu^{k + 1} &= \nu^{k} + \delta \nu.
\end{align}
\end{subequations}
Despite the low-rank approximation, $\mathcal H_r$ is usually not
explicitly available. Hence, this Newton system must be solved using
an iterative method that, such as the conjugate gradient method,
only requires the application of $\mathcal H_r(\nu^k)$ to vectors.

\subsection{Preconditioning of Newton system}
The convergence of the conjugate gradient (CG) method in each Newton step
depends crucially on the availability of an effective preconditioner.
This is particularly true if the Hessian operator is very
ill-conditioned, as is the case due to the $1/\nu^3$ term in $\mathcal
H_r$, which can vary over many orders of magnitude if
$\eps$ is small. Using the low-rank approximations established
above, we propose a diagonal preconditioner that is effective in
practice, as we illustrate numerically in \cref{sec:numerics}.
The diagonal of the Hessian $\mathcal H_r(\nu)$ is
given by
\begin{equation}\label{eq:precond}
\mathcal P_\diag(\nu) = -2\beta \diag(S_{\nu, r})\sum_{i=1}^{\tilde r}\left((S_{\nu, r}e_i) \odot(S_{\nu, r}e_i)\right) + \frac{2}{(\nu)^3}.
\end{equation}
Here, $\diag(S_{\nu,r})$ is the diagonal of $S_{\nu,r}$, which can be
computed from \eqref{eq:Strunc} using that
\begin{equation}\label{eq:precon_diag}
 \diag(U_r(\Lambda_r^{-1} + U_r^\star D_\nu^{-1}U_r)^{-1} U_r^\star )
 = \sum_{i=1}^r u_i \odot w_i,
\end{equation}
where $w_i=(\Lambda_r^{-1} + U_r^\star D_\nu^{-1}U_r)^{-1}u_i$. Note
 that the terms $S_{\nu, r}e_i$ in \eqref{eq:precond}
are already available from the gradient computation. Moreover, this
diagonal preconditioner depends on $\nu$, which means it must be
recomputed for each Newton step.

\section{Offline-online algorithms for shared sparsity control problem}
The algorithms presented in this section are the result of combining the norm reweighting
algorithms from \cref{sec:reweighting,sec:newton} with the
low-rank approximations from \cref{sec:lowrank}. For large-scale and thus
computationally challenging problems, our method can be split into
an {\em offline phase}, itself consisting of a setup and a compute step,
and an {\em online phase}. The offline phase includes a setup step, in which
we construct a low-rank approximation for the PDE-solution operator
$A^{-\star}A^{-1}$ and the operator $A^{-\star}A^{-1}B\mathcal
C_0^{1/2}$.
This is followed by the offline compute step, in which we solve the
optimization problem \eqref{eq:optcon_linear_s} or \eqref{eq:J}. In
this step,
one can adjust the weight $\beta>0$ for the sparsity-enhancing
term in the objective to obtain the desired sparsity structure which,
in applications, depends on the availability of control devices. In
the online phase, the goal is to compute the optimal control for a
specific (and known) realization of the uncertain parameter
$m(\omega)$. Here, one can use the low-rank approximation for the fast
computation of the optimal control for a specific event $\omega$. In
this step, the sparsity structure and operator approximations
determined in the offline phase are used.
\subsection{Offline phase}
In the offline phase, we first compute a rank-$r$ approximation of the
positive self-adjoint operator $A^{-\star}A^{-1}$, i.e.,
\begin{equation}\label{eq:expansion}
A^{-\star}A^{-1} \approx U_r\Lambda_r U_r^\star .
\end{equation}
As shown in the previous section, the error in the optimal control
solution due to truncation is small when the truncated eigenvalues are
small compared to $\alpha$---see \eqref{eq:truncation}. The low-rank
approximation can be found using either the Lanczos method \cite{Saad03}, or
a randomized algorithm \cite{HalkoMartinssonTropp11}. These methods
only require the application of the linear operator $A^{-*}A^{-1}$ to
vectors, i.e., each application amounts to a solve with the forward
and the adjoint PDE operators $A$ and $A^\star$.
Next, we compute $e_0$ according to \eqref{eq:e0}
and compute a rank-$\tilde r$ approximation of
$A^{-\star}A^{-1}B\mathcal C_0^{1/2}$ as follows:
\begin{equation}\label{eq:EF}
E_{\tilde r}F^\star_{\tilde r} \approx A^{-\star}A^{-1}B\mathcal C_0^{1/2},
\end{equation}
where one can use the low-rank approximation
\eqref{eq:expansion}. In most practical applications, $B\mathcal
C_0^{1/2}$ has a fast decaying spectrum since $\mathcal C_0$ is a
trace class operator and thus, typically, $\tilde r< r$.
After these
preparations, we are ready to either employ the IRLS algorithm
(\cref{alg:reweighting}) or its Newton variant
(\cref{alg:newton}).
%
%
\begin{algorithm}[ht]
\caption{Norm-reweighting for shared sparsity control (IRLS). \label{alg:reweighting}}
\begin{algorithmic}[1]
  \STATE{Setup step: Compute \cref{eq:expansion,eq:e0,eq:EF}.}
  \STATE{Choose $\nu^0\in L^\infty(\D)$,
  $\nu^0(\bs x)>0$ a.e.\ in $\D$, and $\eps_0>0$.}
\FOR{$k=0,1,2,\ldots$ compute}
\STATE\label{reweight:line3}{For $\nu:=\nu^k$, define $S_{\nu,r}$ as
  in \eqref{eq:Strunc}.}
\STATE{Compute $\nu^{k+1} = \left(\sum_{i=0}^{\tilde r} (
  S_{\nu, r}e_i)^2+\eps^2_{k+1}\right)^{-1/2}$. }
\STATE\label{reweight:terminate}{Terminate if the
  norm of $(\nu^{k+1})^{-2} - (\nu^k)^{-2}$ is small.}
\STATE{Update $\eps_{k+1}\le \eps_k$.}
\ENDFOR
\RETURN $\bar\nu:=\nu^{k+1}, u_{\bar \nu}, S_{\bar \nu,r}$.
\end{algorithmic}
\end{algorithm}
%
Note that in \cref{alg:reweighting}, the expression in the termination
criterion (\cref{reweight:terminate}) is the norm of the
reduced gradient \eqref{eq:Gred} due to the relation \eqref{eq:grad_reweighting}.

\begin{algorithm}[ht]
\caption{Newton-CG norm-reweighting for shared sparsity control
  (NIRLS). \label{alg:newton}}
\begin{algorithmic}[1]
  \STATE{Setup step: Compute \cref{eq:expansion,eq:e0,eq:EF}.}
  \STATE{Choose $\nu^0\in L^\infty(\D)$,
  $\nu^0(\bs x)>0$ a.e.\ in $\D$, and $\eps_0>0$.}
\FOR{$k=0,1,2,\ldots$ compute}
\STATE\label{newton:line3}{For $\nu:=\nu^k$, define $S_{\nu,r}$ as
  in \eqref{eq:Strunc}.}
\STATE{Compute $\mathcal G_r(\nu^k)$ according to \eqref{eq:Gred}.}
\STATE\label{newton:terminate}{Terminate if norm of the gradient is small.}
\STATE{Perform CG iterations for Newton system
  \eqref{eq:newton_step_appro} using preconditioner \eqref{eq:precond}.}
\STATE{Update $\eps_{k+1}\le \eps_k$.}
\ENDFOR
\RETURN $\bar\nu:=\nu^{k+1}, u_{\bar \nu}, S_{\bar \nu,r}$.
\end{algorithmic}
\end{algorithm}

\subsection{Online phase}
During the online phase, we compute the optimal control for a specific
realization of the uncertain parameter $m(\omega)$. This step uses the
weight function $\bar\nu$ found in the offline phase, and also uses
the truncated spectral expansion \cref{eq:truncation}.
To be precise, for a sample draw $m(\hat\omega) = m_0+\hat m(\omega)$
from $\mathcal N(m_0, \mathcal C_0)$, the corresponding optimal
control $\hat u \in V$
is computed as
\begin{equation}\label{eq:u_hat}
\hat u = S_{\bar\nu,r} \left(e_0 -  U_r\Lambda_r U^\star_r B\hat
m\right),
\end{equation}
where $\bar\nu$ and $S_{\bar\nu,r}$ are as returned by
\cref{alg:reweighting} or \cref{alg:newton}. Note that
the optimal control $\hat u$ has the sparsity structure determined in
the offline phase.


\subsection{Computational cost}\label{subsec:compcost}
Here, we summarize and compare the dominant computational cost of the
proposed algorithms. We denote by $N$ the discretization dimension of
the state and control variable, and discuss the complexity of the
offline phase (setup and optimization steps) and the online phase
(computation of optimal control). In the arguments below, we assume
that $\tilde r\le r\ll N$.
%
\paragraph{Offline phase: Setup}

In the offline phase, we first compute the truncated spectral
approximation \eqref{eq:expansion}. This requires solves with $A$ and
the adjoint $A^\star$. The number of required solves depends on the
spectrum of the operator $A^{-\star}A^{-1}$, on the value of $\alpha$
and on the truncation error one is willing to commit. The reason why
we report the complexity in terms of PDE solves is that the cost in
terms of operations depends on whether $A$ is available as assembled
matrix, which solvers are applicable to solve systems with
$A$ and $A^*$, and which solvers are available to a user.  To compute a rank-$r$
approximation usually requires $r+d$ products with
$A^{-\star}A^{-1}$, where $d$ is small (e.g., 10) to obtain a
accurate rank-$r$ approximation. If the low-rank approximation is
computed using a randomized singular value decomposition, then $d$ is
the oversampling factor \cite{HalkoMartinssonTropp11}. In the Lanczos
method, adding $d$ iterations enriches the Krylov space and thus leads
to improved accuracy of the dominant directions.
%
It remains to estimate the computational work for computing a singular
value decomposition of
$A^{-\star}A^{-1}B\mathcal C_0^{1/2}$, required to find $E_{\tilde
  r}=[e_1,\ldots,e_{\tilde r}]$, during the offline phase. This step
can build on the truncated spectral decomposition of
$A^{-\star}A^{-1}$ and thus does not require
additional PDE solves.  Hence, the complexity of the offline setup
phase is $2(r+d)$ PDE solves.


\paragraph{Offline phase: Optimization}
After the above setup step, the remaining steps do not require further
PDE solves and we simply estimate the complexity of the proposed
algorithms in terms of elementary linear algebra operations.  Let us
first consider the computations required in each iteration of IRLS
(\cref{alg:reweighting}).  Note that each step is equivalent to
computing
\eqref{eq:G}, the gradient $\mathcal G_r$ of the reduced objective
$\tilde J$. Thus, the computational complexity of one IRLS step
coincides with computing the right hand side for the Newton step
\eqref{eq:Newtonstep}.

Computing $\mathcal G_r$ requires application of the operator $S_{\nu,
  {r}}$, defined in \eqref{eq:Strunc}, to vectors. First, this
necessitates the inverse of the $r\times r$ matrix $(\Lambda_r^{-1} +
U_r^*D_{\nu}^{-1}U_r)$. This step is dominated by the computation of
$U_r^*D_{\nu}^{-1}U_r$, which amounts to $r^2N$ operations.  Since we
assume that $N\gg r$, this dominates computation of an $r\times r$
matrix inverse.  Each application of $S_{\nu,r}$ to a vector requires
$2rN$ operations, amounting overall to a complexity of $2rN(\tilde
r+1)$ operations to compute $\|u\|_\Omega$.  Thus, for the IRLS
algorithm, the computational complexity per iteration is
$rN(r + 2\tilde r)$.

Additionally to the computation of the gradient $\mathcal G_r$,
each iteration of the NIRLS method (\cref{alg:newton}) requires the
application of the Hessian to one vector in each CG step, amounting to
$2\tilde r rN$ operations.  It also
requires to setup the preconditioner matrix, which requires $r^2 N$
operations as can be seen from \eqref{eq:precon_diag}. Hence, we find
that the computational complexity for one inexact Newton-CG step is
$2rN(r + \tilde r + \tilde rn_{\text{cg}} )$, 
where $n_{\text{cg}}$ denotes the number of CG
iterations.

Note that it depends on $r,\tilde r$ how much larger
the complexity of a NIRLS iteration is than an IRLS iteration. If $\tilde r$ is significantly smaller than $r$, as in the
example problems in \cref{sec:numerics}, one CG step only amounts to a
fraction of the complexity of one IRLS
step. Finally, note that all steps in the offline optimization
algorithms have optimal complexity, i.e., they depend linearly on the
discretization dimension $N$.

\paragraph{Online phase}
In the online phase, the only necessary computation is
\eqref{eq:u_hat}. For any fixed sample draw $\hat
m(\omega)$, this is a deterministic expression. This online step
requires multiplication of a vector with $U_r$ and its transpose,
and it requires one application of $S_{\nu, r}$ to a vector. This
amounts to overall about $4rN$ operations, and can thus be
done fast and potentially in real-time, depending on the application.


\section{Numerical experiments}
\label{sec:numerics}
We end this paper with a numerical study for the stochastic control
problem with shared sparsity. Our aims are to study the qualitative
effect of the shared sparsity term on the optimal controls (\cref{sec:qualitative}), and to
investigate the performance and accuracy of the proposed
algorithms (\cref{sec:performance}).

For this purpose, we use three model problems, which all use the
physical domain $\D=(0,1)^2\subset \mathbb R^2$. The boundary is split
in $\partial D_2=\{0\}\times [0,1]$ and $\partial D_1=\partial
D\setminus \partial D_2$. A simple finite difference approximation
(i.e., the five-point stencil) on a mesh of $n\times n$
points is used to discretize the Laplacian
that is part of the differential operator $A$.
While both algorithms we propose allow for a
decreasing sequence of positive values $\eps_1\ge \eps_2\ge \ldots$,
we fix $\eps$ to a small value in our tests, and study the influence
of that value on the performance of the methods.

\begin{figure}[tb]\centering
  \begin{tikzpicture}
    \begin{scope}[xshift=-2.5cm]
    \begin{axis}[width=.55\columnwidth,xmin=0,xmax=1,ymin=-4.5,ymax=4.5,compat=1.3, legend pos=north east]
      \addplot[color=blue!30!green,mark=none,thick] table [x=x,y=m1]{data_paper/Fig_1/samples.txt};
      \addlegendentry{\small $m_1$}
      \addplot[color=blue!60!green,mark=none,thick] table [x=x,y=m2]{data_paper/Fig_1/samples.txt};
      \addlegendentry{\small $m_2$}
      \addplot[color=blue!90!green,mark=none,thick] table [x=x,y=m3]{data_paper/Fig_1/samples.txt};
      \addlegendentry{\small $m_3$}
      \foreach \nn in {6,...,20}
      {
        \addplot[color=black!30!white,mark=none,thin] table
        [x=x,y=n\nn]{data_paper/Fig_1/moresamples.txt};
      }
      \addplot[color=blue!30!green,mark=none,thick] table [x=x,y=m1]{data_paper/Fig_1/samples.txt};
      \addplot[color=blue!60!green,mark=none,thick] table [x=x,y=m2]{data_paper/Fig_1/samples.txt};
      \addplot[color=blue!90!green,mark=none,thick] table [x=x,y=m3]{data_paper/Fig_1/samples.txt};
    \end{axis}
    \node[color=black] at (0.5,4.2cm) {a)};
\end{scope}
\node at (6.5,2cm) {\includegraphics[width=0.45\columnwidth]{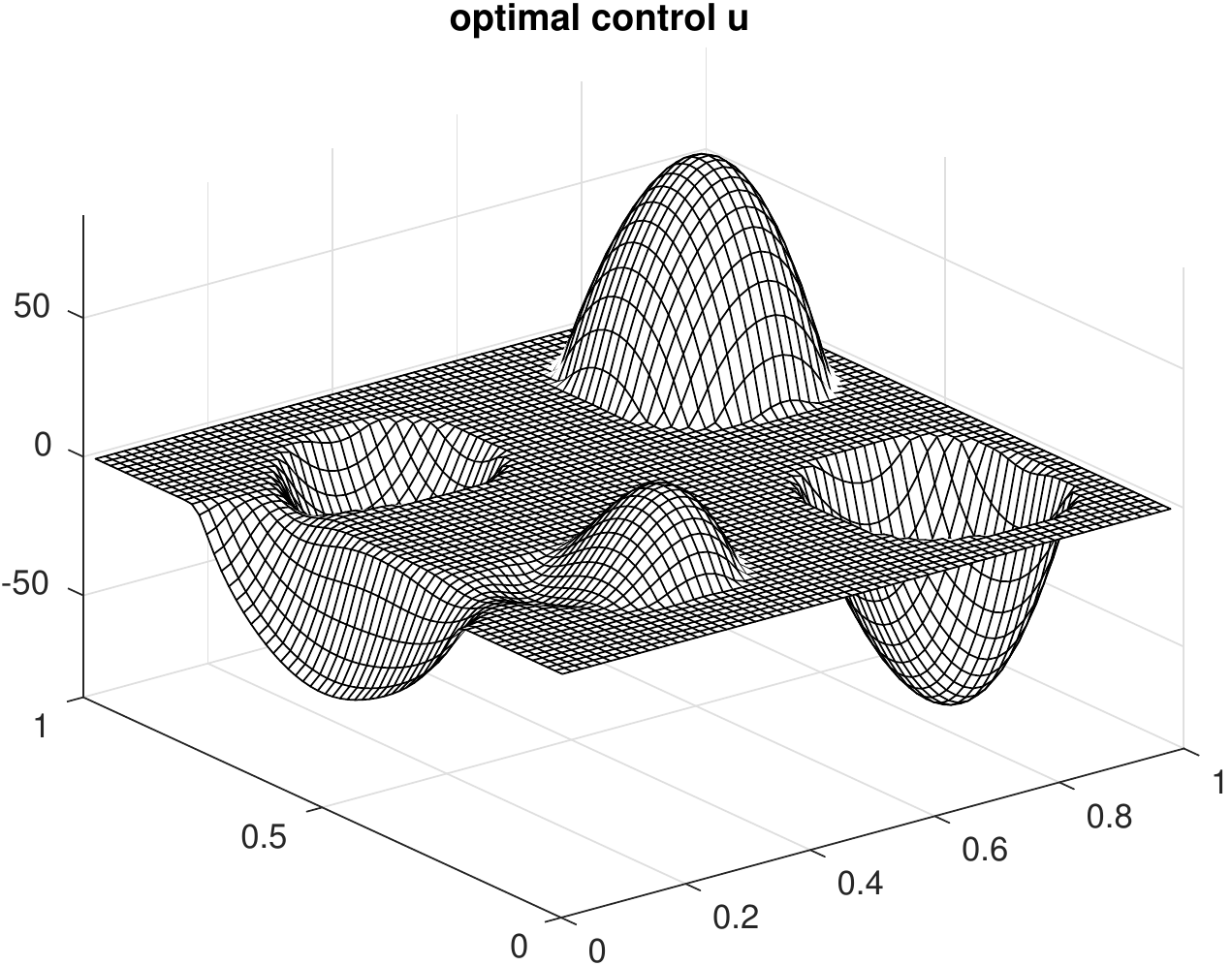}};
\node at (0,-3.5cm) {\includegraphics[width=0.45\columnwidth]{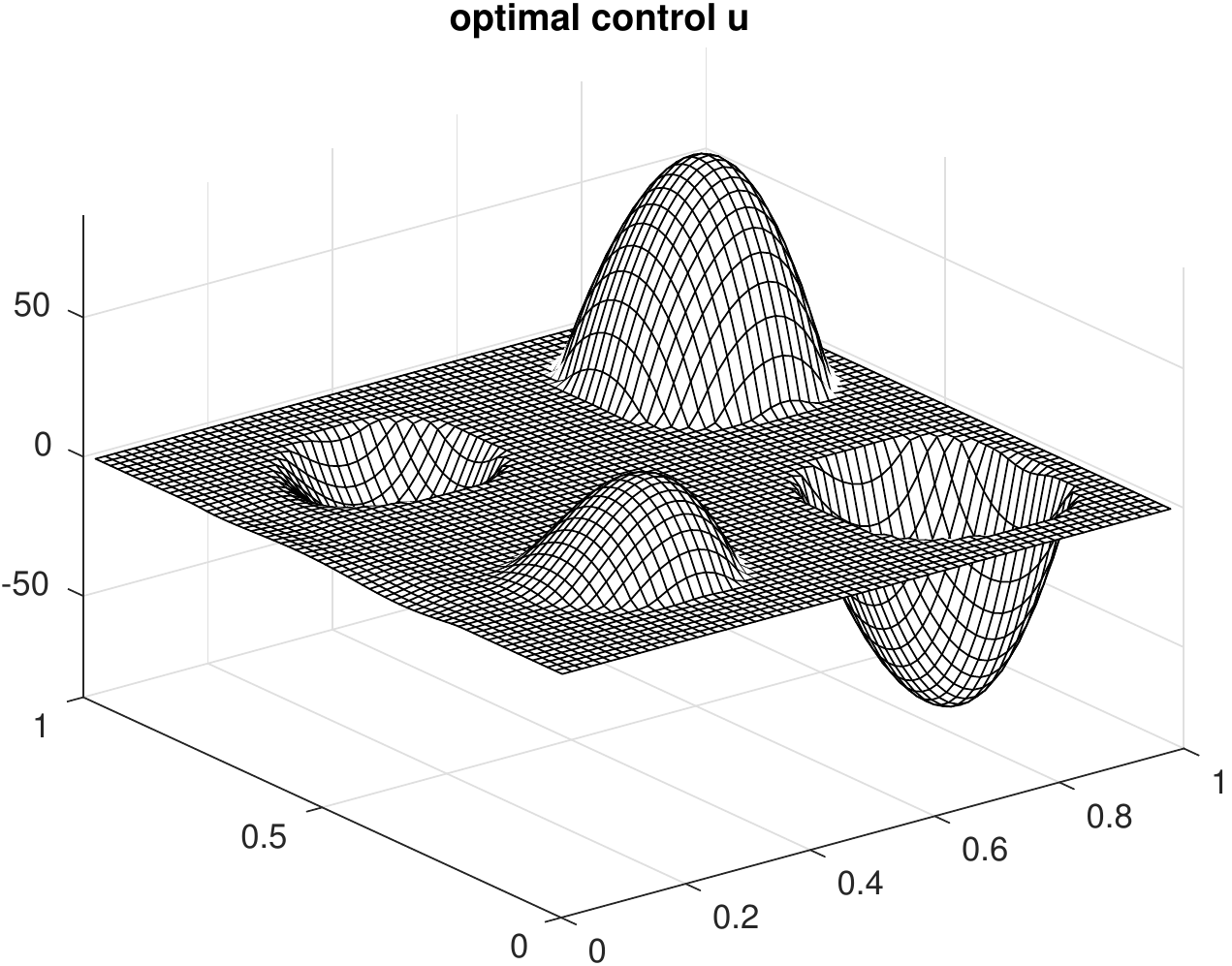}};
\node at (6.5,-3.5cm) (n4) {\includegraphics[width=0.45\columnwidth]{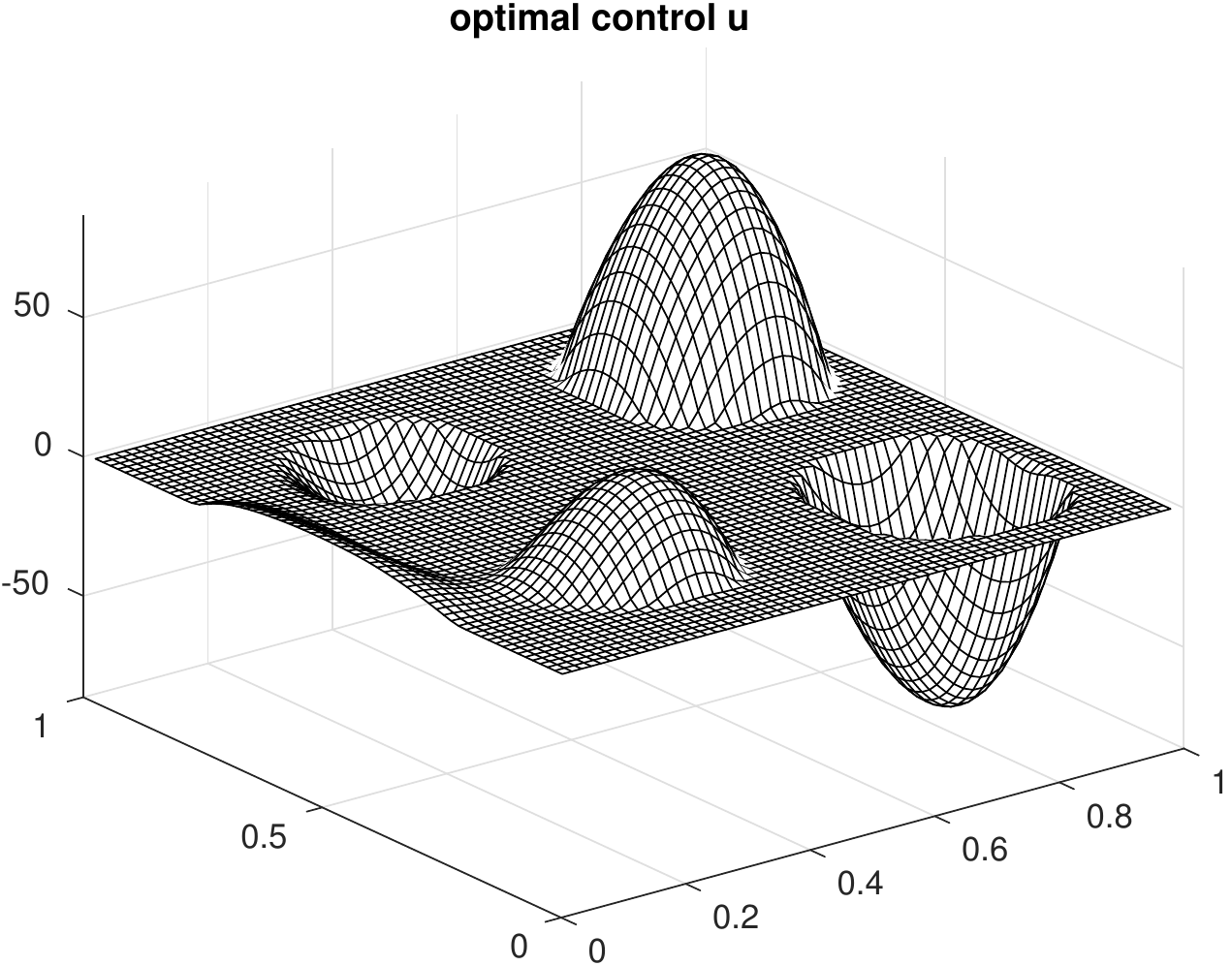}};
\draw [color=blue!30!green,fill=white!90!black] (4.5,4cm) rectangle (8.5,4.5)
node[color=blue!30!green,pos=0.5] {b) optimal control for $m_1$};
\draw [color=blue!60!green, fill=white!90!black] (-2,-1.5cm) rectangle (2,-1cm)
node[color=blue!60!green,pos=0.5] {c) optimal control for $m_2$};
\draw [color=blue!90!green, fill=white!90!black] (4.5,-1.5cm) rectangle (8.5,-1cm)
node[color=blue!90!green,pos=0.5] {d) optimal control for $m_3$};
\end{tikzpicture}
\caption{Results for \cref{ex1}: Random draws for boundary Neumann data (a). The
  highlighted samples are used to compute the optimal controls shown
  in (b), (c) and (d). Note that the controls are different but share
  the same sparsity structure.\label{fig:ex1}}
\end{figure}

\begin{problem}\label{ex1}
  This first problem is of the form of \cref{ex1:poisson} with $k=0$
  and $a(\bs x) \equiv 1$.  Except for the uncertain Neumann boundary
  data, it coincides with Example~1 from \cite{Stadler09}. In
  particular, the PDE-operator is $A=-\Delta$ with zero Dirichlet
  boundary conditions on $\partial D_1$ and Neumann boundary
  conditions on $\partial D_2$. Further, $y_d = \sin(2\pi x)\sin(2\pi
  y)\exp(2x)/6$, $f\equiv 0$, $\alpha=10^{-5}$ and $\beta=10^{-3}$.
  The uncertain parameter field enters as Neumann data on $\partial
  D_2$. This data follows an infinite-dimensional Gaussian
  distribution with mean $m_0 \equiv 0$. The covariance operator is given
  as the inverse elliptic PDE operator $\mathcal C_0 =
  \gamma(-\partial_{{\bs x\bs x}})^{-1}$, with homogeneous Dirichlet
  boundary conditions at the boundary of $\partial D_2$, i.e., at the
  two points $(0,0)$ and $(1,0)$, and with $\gamma=4$. It can easily
  be verified that $\mathcal C_0$ is a symmetric and positive definite
  trace-class operator, and thus defines a valid covariance operator
  \cite{DaPratoZabczyk14}.  Random draws from this distribution are
  shown in \cref{fig:ex1}a, and optimal controls in the remaining
  figures in \cref{fig:ex1}.
\end{problem}

\begin{problem}\label{ex1.5}
  This problem has the form of \cref{ex1:poisson_rhs}. The data are as
  in \cref{ex1}, but the uncertainty enters on the right hand side of
  the equation rather than as Neumann boundary data, and
  $\alpha=5\times 10^{-5}$. The uncertain parameter $m$ is distributed
  as an infinite-dimensional Gaussian random field over the
  two-dimensional physical domain $\D$. Its mean is $m_0 \equiv 0$ and
  the covariance operator is the squared inverse elliptic PDE operator
  $\mathcal C_0 = \gamma(-\Delta)^{-2}$, where $\gamma=20^2$ and the
  Laplace operator $\Delta$ in $\mathcal C_0$ satisfies homogeneous
  Dirichlet conditions on $\{1\}\times [0,1]$ and homogeneous Neumann
  conditions for the remaining boundaries.  $\mathcal
  C_0$ is a valid covariance operator on $L^2(\D)$ as it is symmetric,
  positive and trace-class \cite{DaPratoZabczyk14}. A random draw from
  this distribution and the corresponding optimal control are shown in
  \cref{fig:ex1.5}.
\end{problem}

\begin{problem}\label{ex2}
  This problem has the form of \cref{ex1:helmholtz}. In particular,
  $A=-\Delta - \kappa^2I$ with $\kappa=12$ is the indefinite Helmholtz
  operator.  Moreover, $f\equiv 0$ and $y_d \equiv 0$, i.e., our aim
  is to dampen the uncertain Neumann boundary forcing, whose
  distribution is as in \cref{ex1}. Optimal
  controls for $\alpha=5\times 10^{-5}$ and $\beta=5\times 10^{-4}$ are shown in
  \cref{fig:ex2}. This problem is a substantially simplified version of the
  earthquake engineering/vibration damping problem given as example in
  the introduction, where one aims to find controller locations that
  are best at actively dampening waves originating from uncertain boundary
  forcing. Clearly, this example only uses a single frequency and a
  simple model for wave propagation.
\end{problem}

\begin{figure}[tb]\centering
\begin{tikzpicture}
\node at (0,2cm){
\includegraphics[width=0.45\columnwidth]{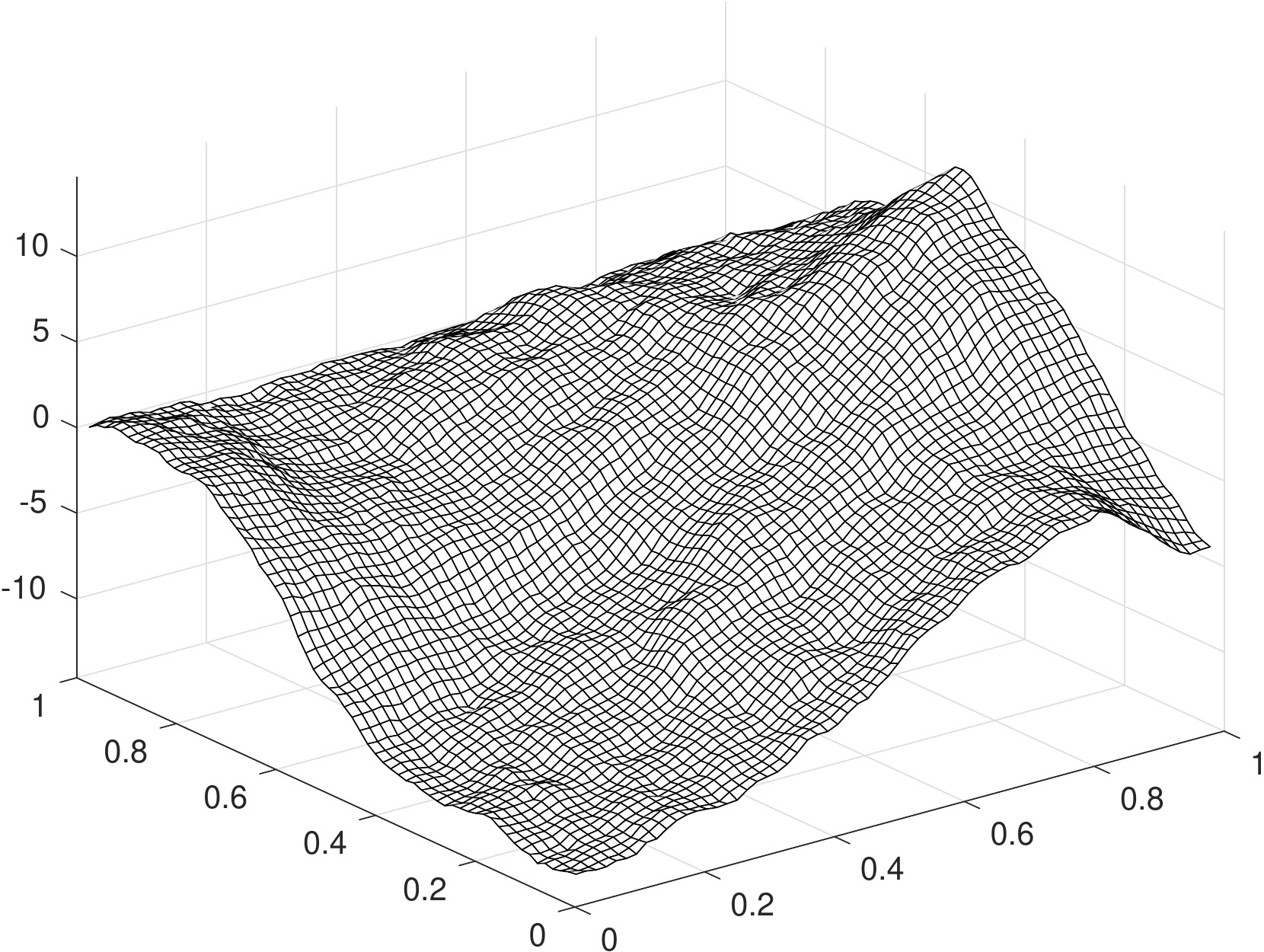}};
\node at (6.5,2cm){
\includegraphics[width=0.45\columnwidth]{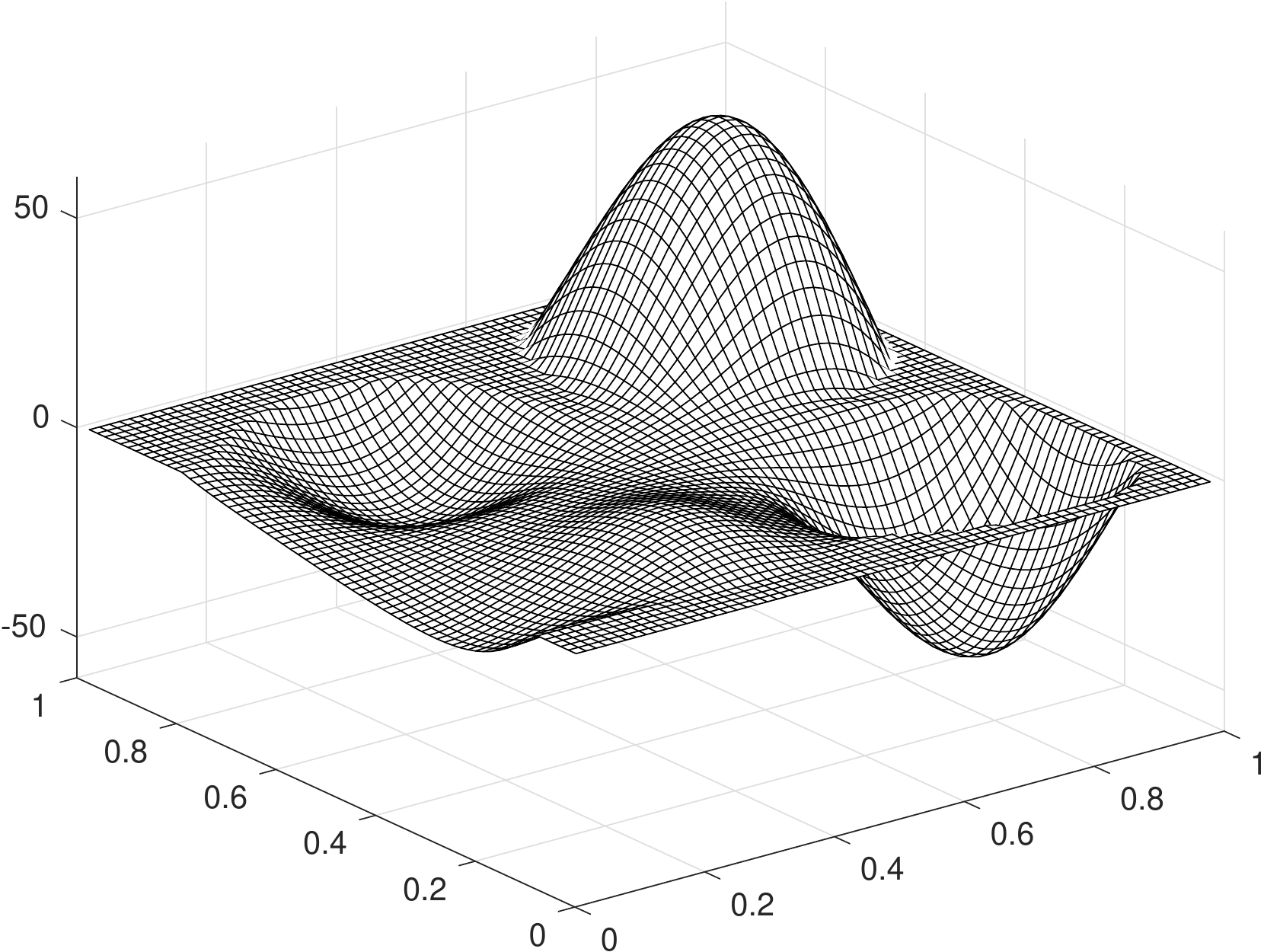}};
\draw [color=blue!50!green,fill=white!90!black] (-1,4cm) rectangle (2,4.5)
node[color=blue!50!green,pos=0.5] {a) random draw $m$};
\draw [color=blue!50!green,fill=white!90!black] (4.5,4cm) rectangle (8.5,4.5)
node[color=blue!50!green,pos=0.5] {b) optimal control for $m$};
\end{tikzpicture}
\caption{Results for \cref{ex1.5}: Shown in (a) is a random draw from
  the Gaussian random field defined over $\D$. Note that all draws of
  the random field satisfy a homogeneous Dirichlet condition on part
  of the boundary.  The figure (b) shows the corresponding optimal
  control.\label{fig:ex1.5}}
\end{figure}

\begin{figure}[tb]\centering
\begin{tikzpicture}
\node at (0,2cm){
\frame{\includegraphics[width=0.35\columnwidth, angle=-90]{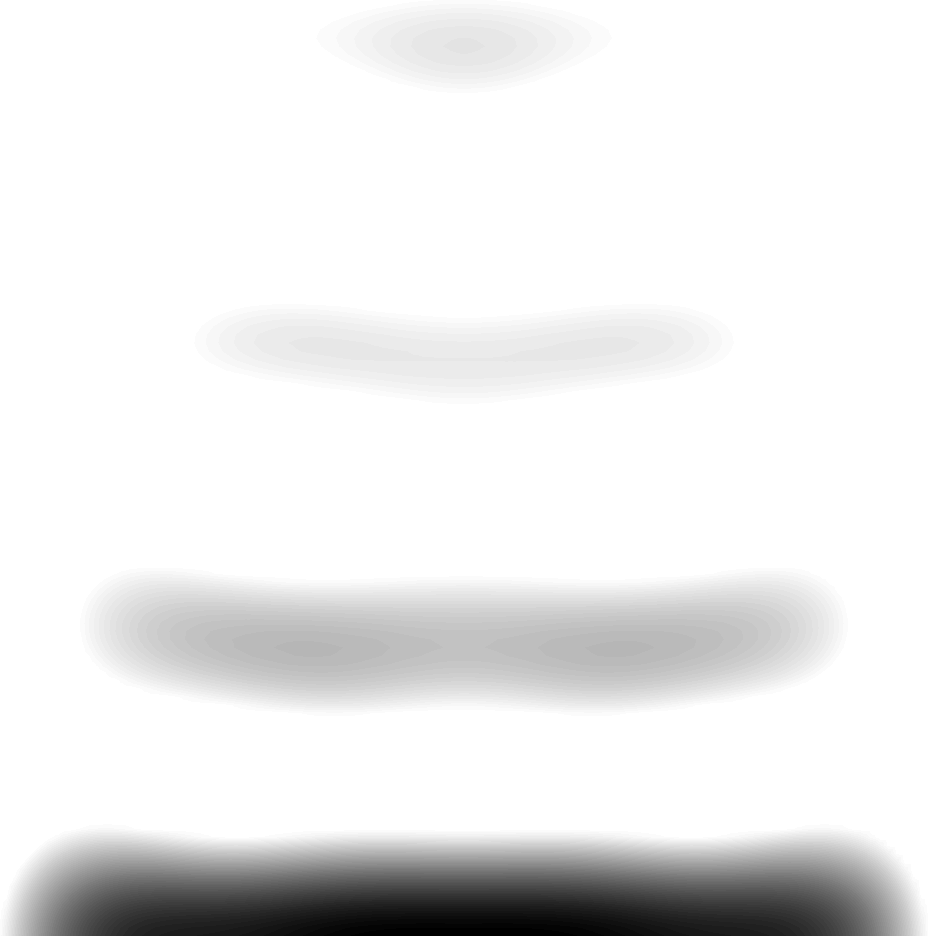}}};
\node at (6.5,2cm){
\includegraphics[width=0.45\columnwidth]{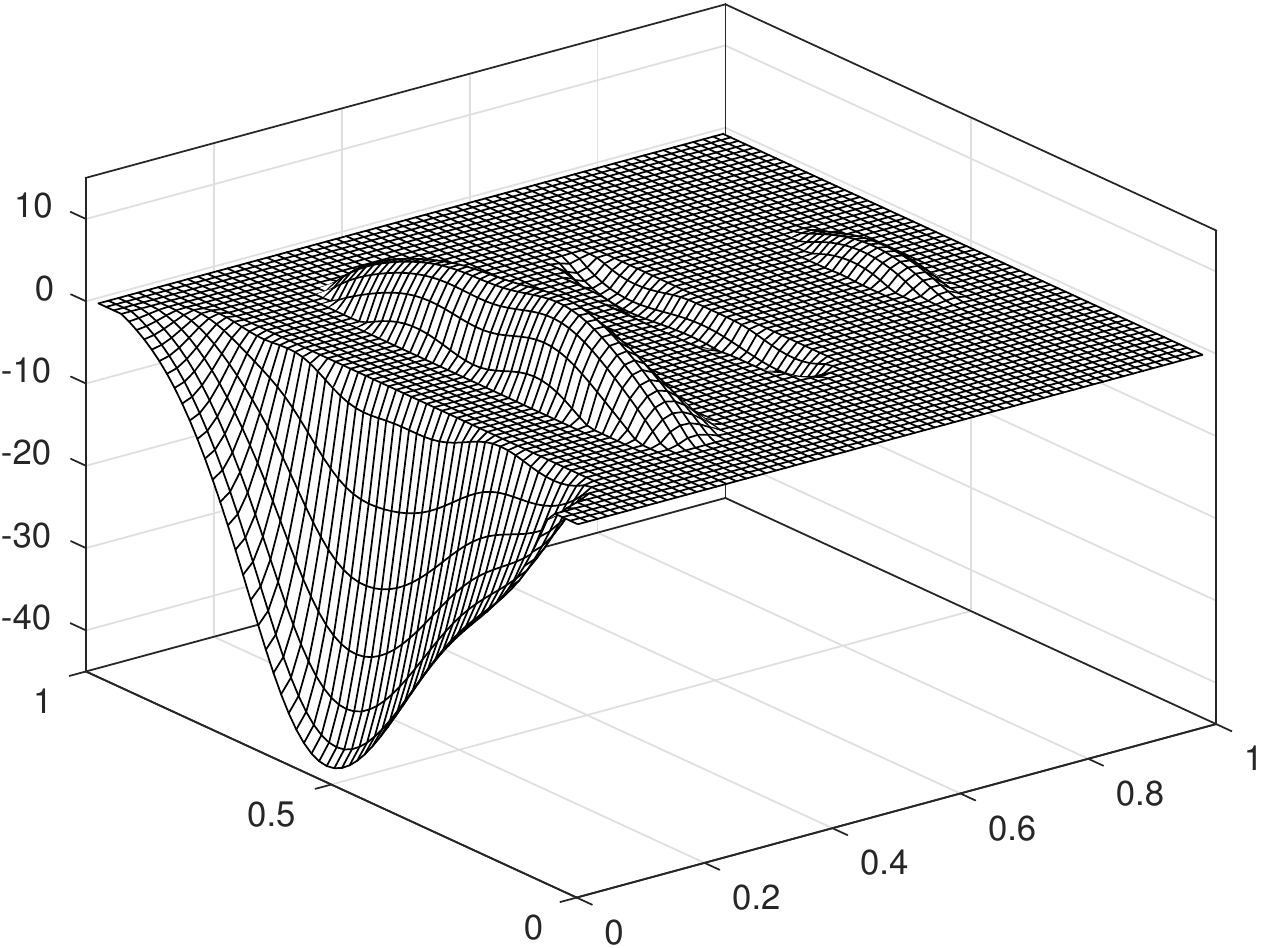}};
\node at (0,-3.5cm)
{\includegraphics[width=0.45\columnwidth]{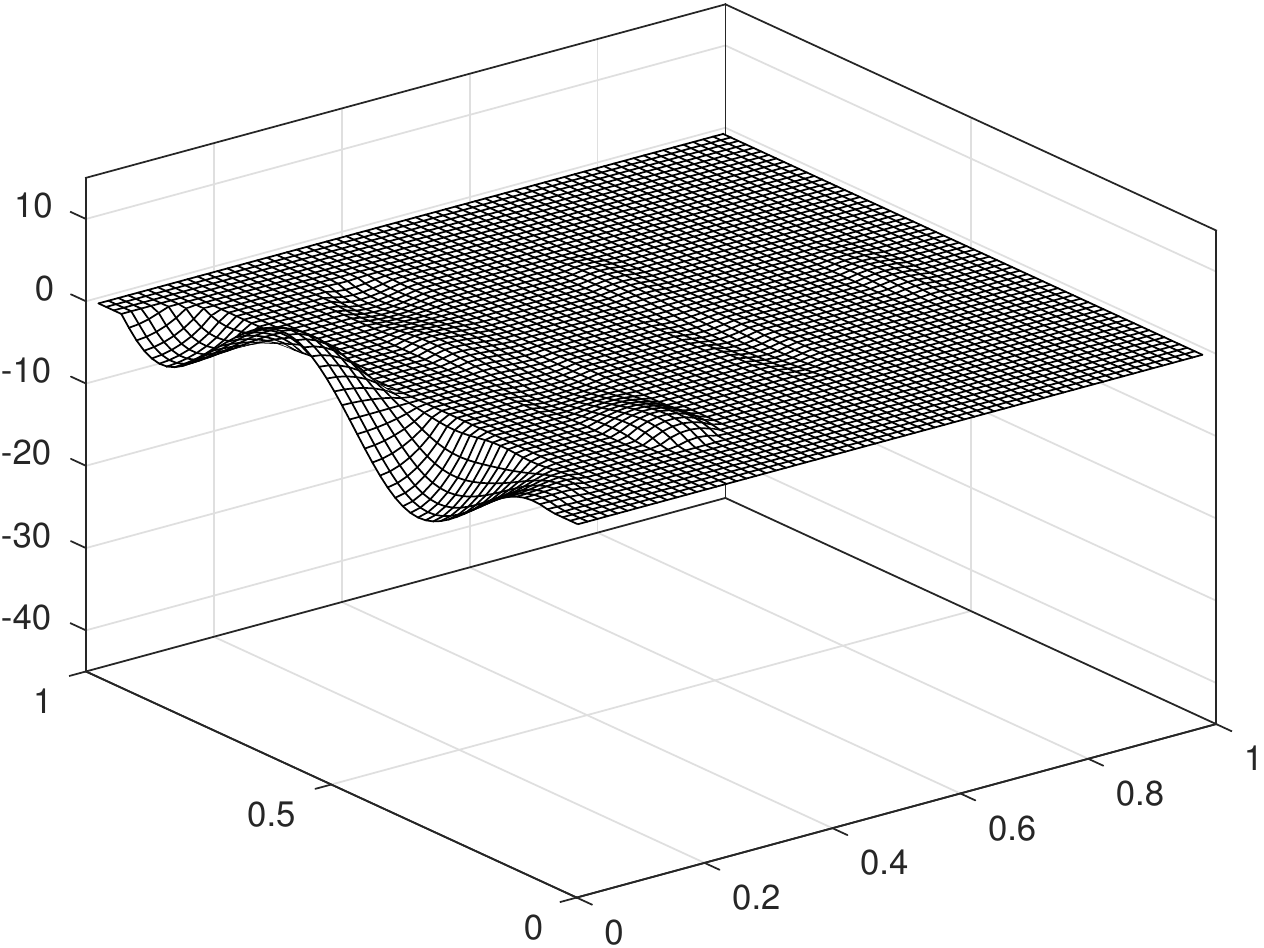}};
\node at (6.5,-3.5cm) {
\includegraphics[width=0.45\columnwidth]{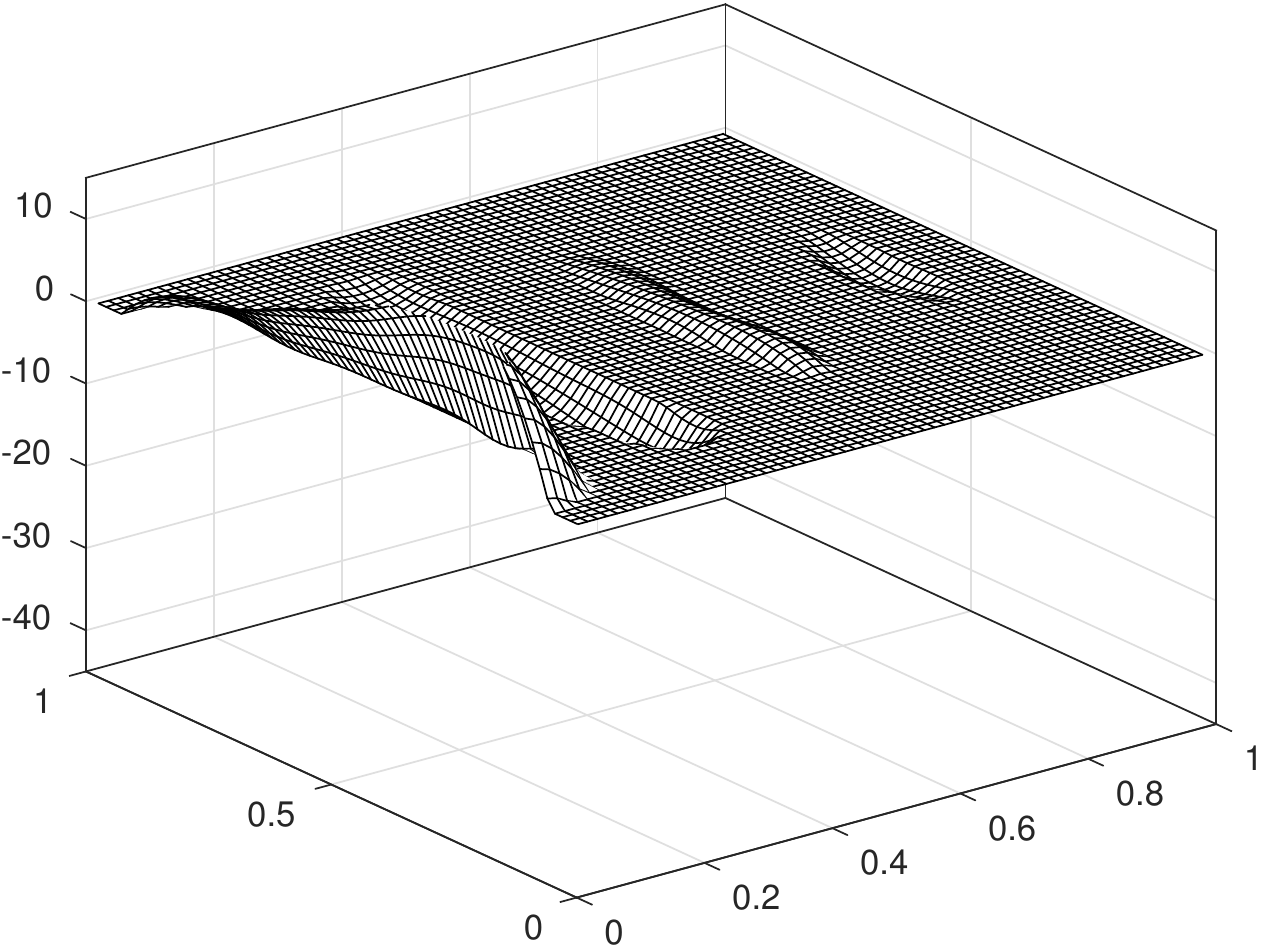}};
\node[color=black] at (-1.5,4cm) {a)};
\draw [color=blue!30!green,fill=white!90!black] (4.5,4cm) rectangle (8.5,4.5)
node[color=blue!30!green,pos=0.5] {b) optimal control for $m_1$};
\draw [color=blue!60!green, fill=white!90!black] (-2,-1.5cm) rectangle (2,-1cm)
node[color=blue!60!green,pos=0.5] {c) optimal control for $m_2$};
\draw [color=blue!90!green, fill=white!90!black] (4.5,-1.5cm) rectangle (8.5,-1cm)
node[color=blue!90!green,pos=0.5] {d) optimal control for $m_3$};
\end{tikzpicture}
\caption{Results for \cref{ex2}: Shown in (a) is the pointwise
  standard deviation $\nu^{-1}$ of the optimal controls. All
  stochastic controls have their support in the gray regions, and
  vanish in the white regions. The figures (b), (c) and (d) show the
  optimal controls corresponding to the same samples of the uncertain
  Neumann boundary condition highlighted in \cref{fig:ex1} (a).  Note
  that the optimal controls are different but have the same sparsity
  structure.\label{fig:ex2}}
\end{figure}

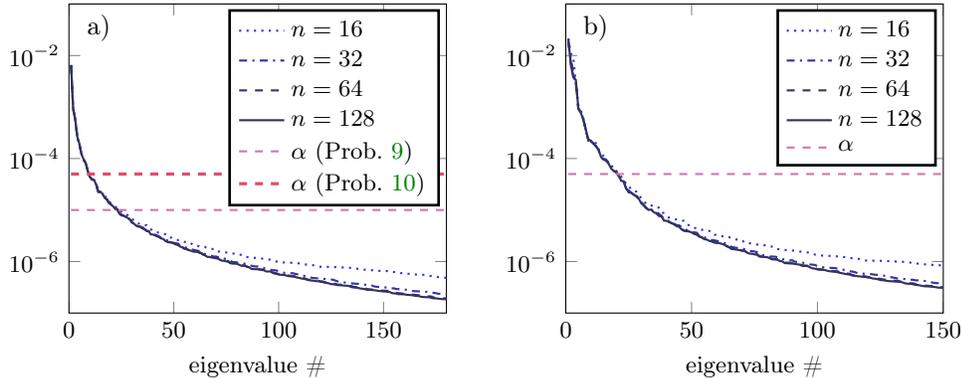
\begin{figure}[tb]\centering
  \begin{tikzpicture}
    \begin{semilogyaxis}[width=6.6cm, xmin=0, xmax=180, ymin=1e-7,
        ymax=0.1, compat=1.3, xlabel= eigenvalue \#, 
        legend style={nodes=right, line width=1pt}, font=\small,
        cells={line width=0.2pt}]
      \addplot[color=black!20!blue!80!white,mark=none,thick,dotted] table [x = x, y =
        eigenvalue]{data_paper/Fig_3/laplace/n_16.txt}; \addlegendentry{$n = 16$}
      \addplot[color=black!40!blue!80!white,mark=none,thick,dashdotted] table [x = x, y =
        eigenvalue]{data_paper/Fig_3/laplace/n_32.txt}; \addlegendentry{$n = 32$}
      \addplot[color=black!60!blue!80!white,mark=none,thick,dashed] table [x = x, y =
        eigenvalue]{data_paper/Fig_3/laplace/n_64.txt}; \addlegendentry{$n = 64$}
      \addplot[color=black!80!blue!80!white,mark=none,thick] table [x = x, y =
        eigenvalue]{data_paper/Fig_3/laplace/n_128.txt}; \addlegendentry{$n =
        128$} 
        \addplot[color=purplecolor!80!red,mark=none,thick,dashed]
      table [x = x, y expr=\thisrow{eigenvalue}*0.2]{data_paper/Fig_3/laplace/alpha.txt};
      \addlegendentry{$\alpha$  (Prob.~\ref{ex1})}
      \addplot[color=purplecolor!50!red,mark=none,very thick,dashed]
      table [x = x, y expr=\thisrow{eigenvalue}]{data_paper/Fig_3/laplace/alpha.txt};
      \addlegendentry{$\alpha$  (Prob.~\ref{ex1.5})}
    \end{semilogyaxis}
    \node[color=black] at (0.4,3.8cm) {a)};
  \end{tikzpicture}
  \hfill
    \begin{tikzpicture}
    \begin{semilogyaxis}[width=6.6cm, xmin=0, xmax=150, ymin=1e-7,
        ymax=0.1, compat=1.3, xlabel= eigenvalue \#, 
        legend style={nodes=right, line width=1pt}, font=\small,
        cells={line width=0.2pt}]
      \addplot[color=black!20!blue!80!white,mark=none,thick,dotted] table [x = x, y =
        eigenvalue]{data_paper/Fig_3/helmholtz/n_16.txt};
      \addlegendentry{$n = 16$}
      \addplot[color=black!40!blue!80!white,mark=none,thick,dashdotted] table [x = x, y =
        eigenvalue]{data_paper/Fig_3/helmholtz/n_32.txt};
      \addlegendentry{$n = 32$}
      \addplot[color=black!60!blue!80!white,mark=none,thick,dashed] table [x = x, y =
        eigenvalue]{data_paper/Fig_3/helmholtz/n_64.txt};
      \addlegendentry{$n = 64$}
      \addplot[color=black!80!blue!80!white,mark=none,thick] table [x = x, y =
        eigenvalue]{data_paper/Fig_3/helmholtz/n_128.txt};
      \addlegendentry{$n = 128$}
        \addplot[color=purplecolor!80!red,mark=none,thick,dashed]
      table [x = x, y = eigenvalue]{data_paper/Fig_3/helmholtz/alpha.txt};
      \addlegendentry{$\alpha$}
    \end{semilogyaxis}
    \node[color=black] at (0.4,3.8cm) {b)};
  \end{tikzpicture}
\caption{Spectra of $A^{-\star}A^{-1}$ for discretizations with
  $n\times n$ points for \cref{ex1}, \cref{ex1.5} (a) and \cref{ex2}
  (b). As reference, we also show the value of $\alpha$, which
  allows estimation of the truncation error as discussed in
  \cref{subsec:truncation}. \label{fig:truncation}}
\end{figure}

\subsection{Qualitative solution properties}\label{sec:qualitative}

Let us first discuss the results of \cref{ex1} shown in
\cref{fig:ex1}. As in the version of this problem not involving
uncertain parameters \cite{Stadler09}, the distributed controls vanish
on parts of the domain. We also find that increasing $\beta$ increases
sparsity (not shown here). The deterministic optimal control, i.e.,
the solution to \eqref{eq:optcon_linear_d1}, looks similar to the
stochastic optimal controls from \cref{fig:ex1} but vanishes near the
Neumann boundary. As expected for the stochastic control problem,
the optimal controls corresponding to different realizations of the
uncertain Neumann data differ, but they share the same sparsity
structure.  Note that the differences between the optimal controls
occur primarily close to the boundary $\partial\D_2$, which is where
the uncertain Neumann data enter in the problem. This local effect of
different Neumann data is due to the locality properties of the
Laplace operator. A different behavior is found for \cref{ex2}. Here,
as can be seen from \cref{fig:ex2}, the optimal controls differ
substantially even far away from $\partial \D_2$, which is a
consequence of the non-local behavior of Helmholtz equation
solutions. In fact, in this problem, the only cause for the controls
to be nonzero is the Neumann boundary data, which impacts the optimal
controls in a non-local manner. We also point out that the solution of
the deterministic version \eqref{eq:optcon_linear_d1} of
\cref{ex2} is zero.

\subsection{Performance of algorithms}\label{sec:performance}
Next, we focus on the accuracy of the low-rank approximations we
employ and the quantitative behavior of our solution algorithms. While
first we compare the low-rank approximations for both, \cref{ex1,ex2},
the remainder of the results shown in this section are for \cref{ex2}. We have verified that the
behavior of the algorithms for \cref{ex1} is similar.

\paragraph{Truncation and low-rank approximation}
Let us first discuss the low-rank approximation of $A^{-\star}A^{-1}$
and the choice of the truncation. In \cref{fig:truncation}, the
spectra of $A^{-\star}A^{-1}$ for \cref{ex1,ex2} are shown for
different mesh resolutions. First, it can be seen that the eigenvalues
converge as the mesh is refined, which is a consequence of the
smoothness of the eigenvectors of the inverse Laplace and Helmholtz
operators. Second, these plots help determine a reasonable truncation
for the low-rank approximation. For that purpose, we recall that from the
error term in \eqref{eq:truncation} it follows that the low-rank error
depends on how small the truncated eigenvalues are compared to the
value of $\alpha$ (also shown in \cref{fig:truncation}.  For our
numerical results, we use low-rank approximations with $r=180$ for
\cref{ex1} and \cref{ex1.5}, $r=150$ for \cref{ex2}. For each example,
we show the exact relative truncation error at the solution and the
upper bound \eqref{eq:truncation2}, in which we truncate the infinite
sums after another $500$ eigenvalues.  For \cref{ex1}, the relative
truncation error rate is $3.3\times 10^{-6}$ while the estimation is
$3.5\times 10^{-2}$. In \cref{ex2}, we have a relative truncation
error of $1.5\times 10^{-4}$ and the theoretical bound is $3.0\times
10^{-2}$.  For the problems with the uncertain boundary data, $\tilde
r=16$ results in approximation of \eqref{eq:E} up to
machine precision. For \cref{ex1.5}, $\tilde r =64$ is used since the
two-dimensional Gaussian random field requires more approximation
vectors. This $\tilde r$ captures $>99.9\%$ of \eqref{eq:E}.
We have numerically verified that the convergence behavior does not
change substantially if higher-rank approximations are used. Moreover,
the optimal controls are visually identical when more basis
functions are used, showing that the error due to truncation is small.

\paragraph{IRLS and over-relaxed IRLS}
In \cref{fig:performance}a, we show the performance of
\cref{alg:reweighting}. We attempt to speed up the algorithm by means of
over-relaxation, using the reweighting function
\begin{equation}\label{eq:overrelax}
\nu^{k+1}:=(1-\theta)\nu^k + \theta \bar\nu^{k+1},
\end{equation}
where $\theta\ge 1$ and $\bar\nu^{k+1}$ is the weight function
computed from the (original) IRLS algorithm. For $\theta=1$, we
recover the original method. Empirical experiments have led us to
choose $\theta=1.5$, which leads to moderately faster convergence, as
can be seen in \cref{fig:performance}a. The IRLS algorithms
converge rather slowly but monotonously, as predicted by the
theory and also observed in other contexts
\cite{ItoKunisch13, DaubechiesDeVoreFornasierEtAl10}.  We find
the convergence behavior of the IRLS algorithm to be largely independent
of $\eps$ and of the discretization mesh size
$N$.

\paragraph{NIRLS algorithm}
Next, we study the performance of the preconditioned Newton-CG
algorithm (\cref{alg:newton}). Since the Newton method is not
guaranteed to converge monotonously and the IRLS algorithm converges
rapidly in early iterations, we first perform 15 over-relaxed IRLS
steps, and then switch to NIRLS. \cref{fig:performance}b--d shows
performance results of the method for different numbers of
preconditioned CG iterations per Newton step, various values of $\eps$
and different discretizations. In \cref{fig:performance}b,c, we show
the norm of the gradient versus the computational cost as discussed in
\cref{subsec:compcost}. We choose one step of the IRLS algorithm
(i.e., one computation of the gradient $\mathcal G_r$) as the unit of
cost. The computational complexity of NIRLS iterations is converted to
this cost unit to allow for a fair comparison between the different
methods. For instance, following the complexity estimates from
\cref{subsec:compcost} and using the specific choices $r=150$ and
$\tilde r=16$ used in this problem, the computational complexity of a
step of NIRLS with $3$ CG iterations is $2.35\times$ the cost of one IRLS 
iteration. This ratio increases to $3.23$ if $8$ CG iterations are used in
each NIRLS step.  As can be seen in \cref{fig:performance}b, the NIRLS
method converges significantly faster than the IRLS method. Moreover,
a small number of CG steps per Newton iteration results in the fastest
convergence in terms of computational cost. Thus, we use 3 CG steps
per Newton iteration for the remaining tests. \cref{fig:performance}c
shows that we observe fast local convergence for every value of $\eps$
and that we observe a mild dependence of the convergence on the value
of $\eps$. Finally, \cref{fig:performance}d compares the convergence
for different mesh discretizations and we observe mesh-independent
convergence behavior, which illustrates the efficiency of the diagonal
preconditioner.

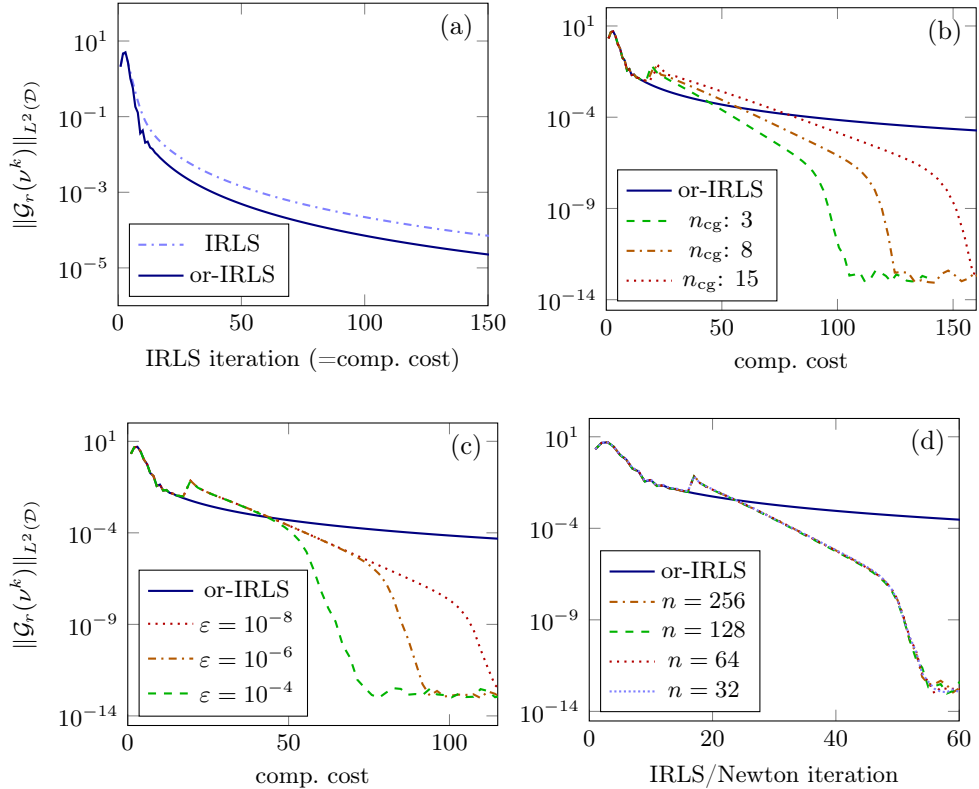
\begin{figure}[tbh]\centering
  \begin{tikzpicture}
    \begin{semilogyaxis}[width=6.5cm, xmin=0, xmax=150, ymin=0.000001,
        ymax=100, compat=1.3, xlabel={IRLS iteration (=comp.\ cost)},
        ylabel = $\|\mathcal G_r(\nu^k)\|_{L^2(\D)}$,
        legend pos=south west, font=\small, cells={line width=0.2pt}]
     \addplot[color=\lightbluecolor,mark=none,thick,dashdotted] table [x = x, y = norm_grad]{data_paper/Fig_4/plot_1/reweighting.txt};
\addlegendentry{IRLS}
\addplot[color=\darkbluecolor,mark=none,thick] table [x = x, y = norm_grad]{data_paper/Fig_4/plot_1/overrelax_1-5.txt};
\addlegendentry{or-IRLS}
    \end{semilogyaxis}
    \node[color=black] at (4.5,3.7cm) {(a)};
  \end{tikzpicture}
  \hfill
    \begin{tikzpicture}
    \begin{semilogyaxis}[width=6.5cm, xmin=0, xmax=160, 
        ymax=100, compat=1.3, xlabel={comp.\ cost}, 
        legend pos=south west, font=\small,
        cells={line width=0.2pt}]
      \addplot[color=\darkbluecolor,mark=none,thick,solid] table [x = x, y = norm_grad]{data_paper/Fig_4/plot_1/overrelax_1-5.txt};
    \addlegendentry{or-IRLS}
    \addplot[color=\greencolor,mark=none,thick,dashed] table [x = x, y = norm_grad]{data_paper/Fig_4/plot_2/cg_3.txt};
    \addlegendentry{$n_{\text{cg}}$: $3$}
    \addplot[color=\orangecolor,mark=none,thick,dashdotted] table [x = x, y = norm_grad]{data_paper/Fig_4/plot_2/cg_8.txt};
    \addlegendentry{$n_{\text{cg}}$: $8$}
    \addplot[color=\redcolor,mark=none,thick,dotted] table [x = x, y = norm_grad]{data_paper/Fig_4/plot_2/cg_15.txt};
    \addlegendentry{$n_{\text{cg}}$: $15$}
    \end{semilogyaxis}
    \node[color=black] at (4.5,3.7cm) {(b)};
  \end{tikzpicture}\\[3ex]
  \begin{tikzpicture}
    \begin{semilogyaxis}[width=6.5cm, xmin=0, xmax=115, 
        ymax=100, compat=1.3, xlabel= {comp.\ cost}, ylabel = $\|\mathcal G_r(\nu^k)\|_{L^2(\D)}$, 
        legend pos=south west, font=\small, cells={line width=0.2pt}]
     \addplot[color=\darkbluecolor,mark=none,thick,solid] table [x = x, y = norm_grad]{data_paper/Fig_4/plot_1/overrelax_1-5.txt};
    \addlegendentry{or-IRLS}
    \addplot[color=\redcolor,mark=none,thick,dotted] table [x = x, y = norm_grad]{data_paper/Fig_4/plot_3/eps_8.txt};
    \addlegendentry{$\eps = 10^{-8}$}
    \addplot[color=\orangecolor,mark=none,thick,dashdotted] table [x = x, y = norm_grad]{data_paper/Fig_4/plot_3/eps_6.txt};
    \addlegendentry{$\eps = 10^{-6}$}
    \addplot[color=\greencolor,mark=none,thick,dashed] table [x = x, y = norm_grad]{data_paper/Fig_4/plot_3/eps_4.txt};
    \addlegendentry{$\eps = 10^{-4}$}   
   \end{semilogyaxis}
   \node[color=black] at (4.5,3.7cm) {(c)};
  \end{tikzpicture}
  \hfill
    \begin{tikzpicture}
    \begin{semilogyaxis}[width=6.5cm, xmin=0, xmax=60, 
        ymax=100, compat=1.3, xlabel={IRLS/Newton iteration}, 
        legend pos=south west, font=\small,
        cells={line width=0.2pt}]
        \addplot[color=\darkbluecolor,mark=none,thick,solid] table [x = x, y = norm_grad]{data_paper/Fig_4/plot_1/overrelax_1-5.txt};
    \addlegendentry{or-IRLS}
      \addplot[color=\orangecolor,mark=none,thick,dashdotted] table [x = x, y = norm_grad]{data_paper/Fig_4/plot_4/n_256.txt};
    \addlegendentry{$n = 256$}
    \addplot[color=\greencolor,mark=none,thick,dashed] table [x = x, y = norm_grad]{data_paper/Fig_4/plot_4/n_128.txt};
    \addlegendentry{$n = 128$}
    \addplot[color=\redcolor,mark=none,thick,dotted] table [x = x, y = norm_grad]{data_paper/Fig_4/plot_4/n_64.txt};
    \addlegendentry{$n = 64$}
    \addplot[color=\lightbluecolor,mark=none,thick,densely dotted] table [x = x, y = norm_grad]{data_paper/Fig_4/plot_4/n_32.txt};
    \addlegendentry{$n = 32$}    
    \end{semilogyaxis}
    \node[color=black] at (4.5,3.7cm) {(d)};
  \end{tikzpicture}
    \caption{Convergence behavior of algorithms for \cref{ex2}.
      Shown in (a) is the reduction of the norm of the gradient for
      IRLS and over-relaxed IRLS (or-IRLS) with $\theta=1.5$ (see
      \eqref{eq:overrelax}). Shown in (b) is a comparison of the
      performance of or-IRLS and the
      preconditioned Newton-CG method NIRLS for different numbers of CG
      iterations per Newton step, where we fix $n=128$,
      $\eps=10^{-7}$.  The figure in (c) compares the convergence of
      the preconditioned Newton-CG method with different $\eps$ for
      $n=128$ and 3 CG iterations per Newton step. Shown in (d) is a
      comparison of the convergence of or-IRLS and the NIRLS method for
      different mesh sizes $n$, $\eps=10^{-7}$ and 3 CG iterations per
      Newton step. As discussed in \cref{subsec:compcost}, in (a)-(c), we
      use the computational work required for one IRLS iteration as
      unit for the $x$-axis to compare the computational complexity of
      the IRLS algorithms and its Newton variants. In (d), we use
      the iteration number as unit for the $x$-axis since we study how the number
      of iterations changes for different discretizations.
      \label{fig:performance}}
\end{figure}

\section{Discussion and remarks}
First, let us discuss the role of $\alpha>0$. This parameter plays a
significant role in our problem formulation, the proposed solution
algorithms and their analysis. Positivity of $\alpha$ is required for the
deterministic problem to be formulated in an $L^2$-Hilbert space
framework rather than over a
space of measures \cite{Casas17,Pieper15}. Additionally, $\alpha>0$ 
plays a crucial role for the truncation of the spectral expansion of
$A^{-\star}A^{-1}$ since we show that the
truncation error is small when the truncated eigenvalues are small
compared to $\alpha$. Both aspects are related to the regularizing effect
positive values of $\alpha$ have on the controls.

Second, it would be desirable to include control bounds in the
proposed algorithms.  However, it is
not obvious how to achieve this without resorting to more general
algorithms that also apply to nonlinear problems and use random
space approximations, such as stochastic Galerkin/collocation or Monte
Carlo methods. The main difference between bound constraints and the
shared sparsity term is that bound constraints apply to the
controls individually for each random event, while the
sparsity term involves integration over the probability
space.

Third, the proposed approach can be generalized to uncertain
parameters that do not follow a Gaussian distribution. As long as for
the resulting distribution of the controls, $\|u\|_\Omega(\bs x)$ can
be computed efficiently, the reweighting algorithms can be applied to
compute jointly sparse controls.

We believe that several questions raised in this paper
deserve further research. For instance, while challenging,
extension to nonlinear problems are worthwhile pursuing, as well as
the question whether the shared sparsity requirement can help to
reduce the effective dimension of non-linear problems. Other
interesting questions include a study of the spatial discretization of
the problem, extensions to parabolic governing equations possibly
combined with directional sparsity, problem formulation
and algorithms for $\alpha=0$, and the question whether
regularization with $\eps>0$ can be avoided.  A question of
potentially general interest is
if the Newton variant of the IRLS algorithm can accelerate the
solution of other non-smooth optimization problems for which currently
reweighting algorithms are used.

\section*{Acknowledgments}
The authors would like to thank two anonymous referees for their
thoughtful comments and suggestions.  GS would like to
additionally thank Noemi Petra, Alen Alexanderian and Kazufumi Ito for
valuable discussions.

\bibliographystyle{siamplain}
\bibliography{ccgo,not_in_ccgo}

\end{document}